\theoremstyle{plain}	
\newtheorem{theorem}{Theorem}[section]
\newtheorem{conjecture}[theorem]{Conjecture}
\newtheorem{lemma}[theorem]{Lemma}
\newtheorem{proposition}[theorem]{Proposition}
\newtheorem{corollary}[theorem]{Corollary}
\theoremstyle{definition}
\newtheorem{defn}[theorem]{Definition}
\theoremstyle{remark}
\newtheorem{rem}[theorem]{Remark}
\newcommand{\Lc}{\text{V}}
\newcommand{\Oc}{\mathcal{O}}
\newcommand{\Aa}{\mathbb{A}}
\newcommand{\odeg}{\overline{\deg}}
\newcommand{\CaDiv}{\text{CaDiv}}
\newcommand{\oh}{\overline{h}}
\newcommand{\Xm}{\mathcal{X}}
\newcommand{\Wm}{\mathcal{W}}
\newcommand{\xn}{x_1,\dots,x_n}
\newcommand{\zq}{z_1,\dots,z_q}
\newcommand{\yt}{y_1,\dots,y_t}
\newcommand{\Pda}{P_{0,\alpha},\dots,P_{d-1,\alpha}}
\newcommand{\Qa}{Q_\alpha}
\newcommand{\Pao}{P_{\alpha, 0}}
\newcommand{\Pad}{P_{\alpha, d-1}}
\newcommand{\idp}{\mathfrak{p}}
\newcommand{\OK}{\mathit{O}_K}
\newcommand{\OKs}{\mathit{O}_{K,S}}
\newcommand{\OFt}{\mathit{O}_{F,T}}
\newcommand{\oQ}{\overline{\mathbb{Q}}}
\newcommand{\ki}{\textit{k }}
\newcommand{\sbe}{\subseteq}
\newcommand{\spe}{\supseteq}
\newcommand{\ord}{\text{ord}}
\newcommand{\ita}[1]{\textit{#1}}
\newcommand{\disc}{\text{disc}}
\newcommand{\dtint}{$(D,{O}_{F,T})$\text{-integral }}
\newcommand{\inv}{^{-1}}
\newcommand{\Qbar}{\overline{\QQ}}
\newcommand\PP{\mathbb{P}}
\newcommand\ZZ{\mathbb{Z}}
\newcommand\NN{\mathbb{N}}
\newcommand\QQ{\mathbb{Q}}
\newcommand\RR{\mathbb{R}}
\newcommand\CC{\mathbb{C}}
\definecolor{orange}{rgb}{1,0.5,0}
\title[Finiteness theorems for complements of large divisors]{Finiteness theorems for complements of large divisors}
\author{Philipp Licht}
\address{Philipp Licht \\
Institut f\"{u}r Mathematik\\
Johannes Gutenberg-Universit\"{a}t Mainz\\
Staudingerweg 9, 55099 Mainz\\
Germany.}
\email{plicht05@uni-mainz.de}
\subjclass[2010]
{14G99 
(11G35, 
14G05,  
11G50,  
32Q45)} 
\keywords{integral points,  persistence conjecture, arithmetic hyperbolicity, subspace theorem}
\begin{document}

\begin{abstract}   
We prove finiteness results on integral points on complements of large divisors in projective varieties over finitely generated fields of characteristic zero. To do so, we prove a function field analogue of arithmetic finiteness results of Corvaja-Zannier and Levin using Wang's function field Subspace Theorem. We then use a method of Evertse-Gy\H ory for concluding finiteness of integral points over finitely generated fields from known finiteness results over number fields.
\end{abstract}

\maketitle

\thispagestyle{empty}

  \section{Introduction}

In \cite{Lang2, LangIHES}, Lang suggests that Diophantine statements involving rational points over number fields should continue to hold over arbitrary finitely generated fields over $\QQ$; see his question on \cite[p.~202]{Lang2} for a precise question of his. For instance, the work of   Siegel-Mahler-Lang  (see \cite{ Mahler1, Parry,Siegel1}) in the classical setting of rings of ($S$-)integers was extended by  Lang to show that the unit equation
\begin{align*}
u+v=1, \quad u,v\in A^*
\end{align*}
has only finitely many solutions when $A$ is any $\ZZ$-finitely generated integral domain of characteristic zero.  

Our main theorem extends finiteness results of Autissier, Corvaja-Zannier and Levin from number fields to finitely generated fields in a similar vein as Lang did for the unit equation.

Before stating our precise results, we introduce some terminology.       By a \emph{variety over a field $K$}   we mean a finite type integral separated scheme over $K$. Let $k$ be an algebraically closed field of characteristic zero. Following \cite[Definition~7.1]{JBook}, a  variety 
  $X$  over $k$ endowed with a closed subset $\Delta$ is said to be \emph{arithmetically hyperbolic  modulo $\Delta$ over $k$} 
if  there is a  $\ZZ$-finitely generated subring $A\subset k$ and a finite type separated $A$-scheme $\mathcal{X}$ with $\mathcal{X}_k \cong X$ over $k$ (i.e., a  model for $X$ over $A$ \cite[Definition~3.1]{JBook}) such that, for all $\ZZ$-finitely generated subrings $ A'\subset k$ containing $A$,  the set $\mathcal{X}(A')\setminus \Delta$  of $A'$-points  on $\mathcal{X}$ is finite.  Thus, roughly speaking,  a variety $X$ is arithmetically hyperbolic modulo $\Delta$ if it has only finitely many integral points outside $\Delta$. We say that $X$ is \emph{pseudo-arithmetically hyperbolic over $k$} if there exists a proper closed subset $\Delta\subsetneq X$ such that $X$ is arithmetically hyperbolic modulo $\Delta$, and we say that $X$ is \emph{arithmetically hyperbolic over $k$} if it is arithmetically hyperbolic modulo the empty subset.  The notion of arithmetic hyperbolicity is independent of the chosen model in the sense that, if $X$ is arithmetically hyperbolic over $k$, then, for every $\mathbb{Z}$-finitely generated subring $A\subset k$ and every model $\mathcal{X}$  for $X$ over $A$,  the set $\mathcal{X}(A)$ is finite; see \cite[Lemma~4.8]{JLalg}. These notions extend   Lang's notions \cite{Lang1} of the Mordell/Siegel property for projective/affine varieties; see        
\cite{Autissier1,  Autissier2, vBJK, CLZ, FaltingsComplements, Faltings2, FaltingsLang,JAut, Jav15, JLitt, 	JL,  	JLFano, 	JXie, 	Levin, 	Moriwaki, 	 UllmoShimura, VojtaSub,	Vojta2, VojtaLangExc} for examples of arithmetically hyperbolic varieties.

Following \cite{JLevin},   if $X$ is a variety over an algebraically closed field $k$ of characteristic zero such that $X_L$ is arithmetically hyperbolic over $L$ for all algebraically closed field extensions $L\supset k$, then we say that $X$ is \emph{absolutely arithmetically hyperbolic}. More generally, if $\Delta \sbe X$ is a closed subset, then $X$ is \emph{absolutely arithmetically hyperbolic modulo $\Delta$} if, for every algebraically closed field $L \spe k$, the variety $X_L$ is arithmetically hyperbolic modulo $\Delta_L$. 
 
Motivated by Lang's aforementioned philosophy on rational points over finitely generated fields, we are interested in the \emph{persistence} of arithmetic hyperbolicity along field extensions. For example, Siegel-Mahler-Lang showed that $\mathbb{A}^1\setminus \{0,1\}$ is arithmetically hyperbolic over $\overline{\mathbb{Q}}$, and Lang showed that this persists over all algebraically closed fields of characteristic zero, thereby proving the so-called ``Persistence Conjecture''  in this case:

\begin{conjecture}[Persistence Conjecture]
Let $k$ be an algebraically closed field of characteristic zero, and let $X$ be a variety over $k$.
If $X$ is arithmetically hyperbolic over $k$, then $X$ is absolutely arithmetically hyperbolic.
\end{conjecture}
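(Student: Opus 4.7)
The plan is to reduce the persistence question to a single geometric input, namely a function field analogue of arithmetic hyperbolicity. Since $k$ is algebraically closed of characteristic zero, any algebraic extension of $k$ is $k$ itself, so it suffices to treat algebraically closed extensions $L \supset k$ of positive transcendence degree. A standard Zorn/transfinite induction argument on transcendence degree then reduces us to the base case $L = \overline{k(B)}$, where $B$ is a smooth projective curve over $k$. Indeed, any $\mathbb{Z}$-finitely generated $A' \subset L$ lives inside $\overline{k(B)}$ for some $B$ depending only on finitely many transcendental generators of $A'$.

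Next I would spread out. Fix a $\mathbb{Z}$-finitely generated $A \subset k$ and a model $\mathcal{X}/A$ witnessing arithmetic hyperbolicity of $X$ over $k$. Enlarging $A$, I may assume that $A$ contains the coordinate ring of some affine open of $B$ and that $\mathcal{X}$ extends to a family $\pi: \mathcal{X}_B \to B$ whose generic fiber is $X_{k(B)}$. For any $\mathbb{Z}$-finitely generated $A' \subset L$ with $A \subset A'$, after possibly passing to a finite cover of $B$, an element of $\mathcal{X}(A')$ corresponds to a section of $\pi$ over some open $U \subset B$. Splitting the section set into constant sections (i.e.\ those landing in a single fiber over $k$) and non-constant ones, the constant part is parametrized by $\mathcal{X}(A)$ for a suitably enlarged model, hence finite by the arithmetic hyperbolicity of $X$ over $k$.

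The core step is therefore to bound the non-constant sections. This is the function field analogue of arithmetic hyperbolicity: if the geometric generic fiber $X_{\overline{k(B)}}$ fails to admit infinitely many non-isotrivial $B$-sections in $\mathcal{X}_B$, one concludes. One would try to deduce this directly from the hypothesis that $X$ is arithmetically hyperbolic over $k$, following the Evertse–Győry strategy referenced in the abstract: any family of sections produces a family of specialized integral points on fibers, and generic finiteness in the specialized fibers ought to propagate back to the sections. A clean implementation requires (i) a specialization principle that avoids the Hilbert irreducibility gap in the positive-dimensional base case, and (ii) an input taming potential Kodaira–Spencer degeneracies coming from isotrivial subfamilies.

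The hard part is precisely step (ii): bridging arithmetic hyperbolicity over $k$ and the geometric/function field statement over $k(B)$ is the content of the conjecture, not a by-product of it. For no known class of varieties does this implication hold by a purely formal argument; in every established case (unit equations, subvarieties of abelian varieties, complements of large divisors, etc.) one reproves the function field version by hand using the same Diophantine tool that yields the number field case, whether that be Faltings' product theorem, Vojta's inequalities, or, as in the present paper, Wang's function field Subspace Theorem. In the absence of such a tool tailored to an \emph{arbitrary} arithmetically hyperbolic $X$, the above outline stalls at step (ii), which is why the conjecture remains open in general and is addressed in this paper only in the large-divisor complement regime where Wang's theorem supplies the missing geometric input.
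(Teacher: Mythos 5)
The statement you were asked about is labelled a \emph{Conjecture} in the paper, and the paper does not prove it in general; it only establishes instances of it (complements of large divisors, Theorem~\ref{thm:I}, via the conditional criterion of Theorem~\ref{thm:EvertseGyoryAffine} and Corollary~\ref{cor:EvertseGyoryAffine}). Your submission is therefore correctly not a proof, and you say so yourself: the outline stalls at your step (ii). That self-assessment is accurate, and it is the right answer. Your reduction --- pass to finitely generated subrings, hence to finite transcendence degree, spread out over a base, and try to control sections by combining specialization to the arithmetic base with a function-field finiteness statement --- is essentially the Evertse--Gy\H ory strategy the paper formalizes in Section~\ref{Chapter4}: embeddings $\varphi_{i,j}\colon L \to F_i$ into function fields bound $\odeg$, specializations $\psi_{u,i}$ to number fields bound $\oh$, and Northcott finishes. (One structural difference: the paper does not induct on transcendence degree one variable at a time, nor does it split sections into constant and non-constant ones; it handles all $q$ transcendental generators simultaneously via the $q$ function fields $F_i$ and a height/degree Northcott property on the finitely generated domain.)

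The genuine gap is exactly where you locate it: the hypothesis ``$X$ is arithmetically hyperbolic over $k$'' gives no formal handle on heights of $F$-points of $X_F$ for function fields $F$, and no known specialization argument manufactures the required function-field height bound out of fiberwise finiteness alone --- a Zariski-dense family of non-isotrivial sections can specialize to finite sets in every fiber. This is why condition (2) of Theorem~\ref{thm:EvertseGyoryAffine} is an \emph{assumption} rather than a consequence of condition (1), and why the paper must supply it by hand (via Wang's function-field Subspace Theorem, Theorem~\ref{t:corvaja zannier function fields}) in the one geometric regime it treats. So: your writeup is a correct description of the state of the art and of the paper's method, but it is not, and was not intended to be, a proof of the conjecture.
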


For projective varieties, the Persistence Conjecture is a consequence of   Lang's conjectures as formulated in \cite[Section~12]{JBook}. Indeed, if $X$ is a projective arithmetically hyperbolic variety over $k$, then every subvariety of $X$ is of general type by this conjecture, which one can show (using, for example, the results in \cite{JV}) implies that every subvariety of $X_L$ is of general type, so that (again by Lang-Vojta's conjecture), the variety $X_L$ is arithmetically hyperbolic over $L$.  

This conjecture was first systematically studied in \cite[Conjecture~1.5]{JAut} (see also \cite[Conjecture~1.20]{vBJK} and \cite[Conjecture~17.5]{JBook}).
It was shown to hold for algebraically hyperbolic projective varieties in \cite[Section~4]{JAut}, and then also for varieties which admit a quasi-finite morphism to a semi-abelian variety \cite[Theorem~7.4]{vBJK}. It was also shown to hold for varieties which admit a quasi-finite period map \cite{JLitt}, hyperbolically embeddable smooth affine varieties \cite{JLevin} and certain moduli spaces of polarized varieties \cite{JSZ}.

In this paper, we extract new results on the Persistence Conjecture from the work of   Evertse-Gy\H ory  (discussed further below). 
 
  \subsection*{Main Results}

To state our first finiteness result, we consider a result of Levin for varieties over number fields \cite[Theorem~6.1A(b), Theorem~6.2A(d)]{Levin}, \cite[Theorem~1.4]{HL} (after work of Corvaja-Zannier \cite{CZ2,CZ3,CZ4}; see also the work of Autissier \cite{Autissier1, Autissier2}) and extend the result to finitely generated fields.  
  
  \begin{theorem}[Main Result I]\label{thm:I}
Let $m$ be a positive integer, let $X$ be a smooth projective variety over $\Qbar$, and let $D=\sum_{i=1}^rD_i$ be a sum of $r$ ample effective divisors on $X$ such that at most of $m$ of the $ D_i $ meet in any point, where $ r > 2m \dim(X) $. Then the affine variety  $X\setminus D$ is absolutely arithmetically hyperbolic.
  \end{theorem}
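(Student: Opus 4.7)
The plan is to reduce absolute arithmetic hyperbolicity to a statement over all finitely generated fields of characteristic zero containing $\Qbar$, and then to establish that statement by combining the known number field case with a function field analogue, following the strategy of Evertse-Gy\H ory. Unwinding the definitions, absolute arithmetic hyperbolicity of $X\setminus D$ amounts to the finiteness of $\mathcal{X}(A)$ for every $\mathbb{Z}$-finitely generated subring $A$ of every algebraically closed extension $L/\Qbar$ and every model $\mathcal{X}$ of $(X\setminus D)_L$ over $A$. Since each such $A$-point is defined over the finitely generated field $\mathrm{Frac}(A)$, a standard spreading-out argument reduces the problem to proving that $(X\setminus D)_K$ is arithmetically hyperbolic over $K$ for every finitely generated field $K\supset \Qbar$.

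The proof then proceeds by induction on the transcendence degree $d$ of $K$ over $\Qbar$. The base case $d=0$ (where $K$ lies in a number field after adjoining finitely many algebraic elements) is the cited theorem of Levin, building on Corvaja-Zannier, Autissier, and Heier-Levin. For the inductive step, I would present a $\mathbb{Z}$-finitely generated domain $A$ with $\mathrm{Frac}(A)=K$ as (a localization of) the coordinate ring of a smooth relative curve $\spec A \to \spec B$ with $\mathrm{trdeg}(\mathrm{Frac}(B)/\Qbar)=d-1$. An $A$-point $P$ of a model $\mathcal{X}$ then unfolds into a $B$-family of integral points on $X\setminus D$, equivalently a $K'$-point of $(X\setminus D)_{K'}$ for $K'$ the function field of the generic fiber. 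If $P$ is non-constant in the fiber direction, the function field analogue of Theorem~\ref{thm:I} (which the author establishes using Wang's function field Subspace Theorem) constrains $P$ to lie in a proper closed subset; if $P$ is constant, it descends to a $\mathrm{Frac}(B)$-point, to which the inductive hypothesis applies.

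The main obstacle is the function field analogue itself. Following the Corvaja-Zannier-Autissier-Levin approach adapted to function fields, one constructs for each point of $X$ lying on some subcollection of the $D_i$'s a family of auxiliary sections of $\mathcal{O}_X(ND)$ (for $N$ large) that vanish to carefully prescribed orders along the components of $D$ meeting at that point, arranged into a filtration whose successive quotients feed into Wang's Subspace Theorem. The combinatorial role of the hypothesis $r>2m\dim X$ is precisely to guarantee, since at most $m$ components meet at any point, that such a filtration exists with strong enough numerical properties to force non-trivial linear dependencies and drive integral sections into a proper closed subvariety. A secondary subtlety is that function field Subspace Theorems only exclude genuinely non-constant sections, so constant families must be handled separately; this is exactly the place where the Evertse-Gy\H ory stratification by transcendence degree becomes essential, and once both inputs are in place the gluing into absolute arithmetic hyperbolicity is formal.
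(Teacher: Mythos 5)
Your high-level identification of the ingredients — Wang's function-field Subspace Theorem for the Corvaja--Zannier--Levin analogue, the Evertse--Gy\H{o}ry mechanism, and Levin's number-field result as the arithmetic input — matches the paper. But the organizational scheme and, more importantly, one crucial step are off.

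The paper does \emph{not} induct on the transcendence degree of the base field. It inducts on $\dim X$. The reason is the nature of what the function-field Subspace Theorem actually gives: over a function field $F$, the statement (Theorem~\ref{t:corvaja zannier function fields}) only produces a proper closed $Z\subsetneq X$, a Zariski-dense subset $R'$ of the integral points, and a bound on $h\circ\phi_D$ on $R'\setminus Z$. A height bound over a function field is \emph{not} a finiteness statement — over an algebraically closed constant field there are infinitely many points of bounded height, and they need not lie in any proper closed subset. So the claim in your inductive step, that ``if $P$ is non-constant in the fiber direction, the function field analogue \dots constrains $P$ to lie in a proper closed subset,'' does not hold as stated. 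What the function-field result buys you is a degree bound on the numerators and denominators in a fixed presentation $A\subseteq A_0[f^{-1},y]$ (Corollary~\ref{cor:functionField}); this degree bound must then be combined with arithmetic hyperbolicity over \emph{number} fields via the specialization maps $\psi_{u,i}$ and a Northcott argument (Proposition~\ref{p:height + height bound = finite dimension 1}, Corollary~\ref{c:height + height bound = finite dimension q}) to force finiteness. That coupling is Theorem~\ref{thm:EvertseGyoryAffine}, and it is exactly the non-formal part you describe as ``the gluing \dots is formal.''

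The second gap is the exceptional locus. Even once Theorem~\ref{thm:EvertseGyoryAffine} yields finiteness of $R\setminus Z$, you still have to dispose of integral points landing inside $Z$. Your proposal never addresses this. In the paper it is handled by the $\dim X$ induction in the proof of Theorem~\ref{thm:I}: the restriction of $D$ to each positive-dimensional irreducible component $Y$ of $Z$ again satisfies the numerical hypotheses ($r>2m\dim X > 2m\dim Y$, and the local intersection bound $m$ is preserved under pullback along a closed immersion), so by induction $Y\setminus D$ is absolutely arithmetically hyperbolic. If you keep your transcendence-degree induction you will still need a separate, nested induction on $\dim X$ (or an equivalent device) to treat $Z$; without it the argument does not close. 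Finally, a smaller point: in the paper the reduction from very general finitely generated rings to the nice form $A_0[f^{-1},y]$ and the step-by-step handling of the ample divisor $E$ via Lemma~\ref{lem:levinLarge} and Proposition~\ref{lem:Nicelem} (replacing $D$ by a very large pullback $E'=\pi^*E$ on a resolution $X'$, then transferring the height bound back down via Weil's height machine) is where most of the technical work happens, and your ``relative curve'' framing elides this entirely.
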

  
	In \cite{JLevin}, Javanpeykar-Levin prove Theorem \ref{thm:I} by 1) verifying that   $X\setminus D$ is hyperbolically embeddable (over $\mathbb{C}$) and 2) verifying that the Persistence Conjecture holds for hyperbolically embeddable varieties. Combining 1) and 2) with Levin's theorem then completes the proof. 
	
  Our proof of Theorem \ref{thm:I} proceeds in a different fashion. 
  The main step of Levin's argument is to prove the pseudo-arithmetic hyperbolicity of complements of large divisors \cite[Theorem~8.3A]{Levin}). In the first step, we prove a function field version of that result.

  \begin{defn}[Large divisor]\label{def:large} 
  	Let $X$ be a smooth projective variety over a field $ k $. 
  	An effective divisor $D$ on $X$ is \ita{very large} if for all $ P \in D(\overline{k}) $, there is a basis $ B $ for 
  	$ \Lc(D) = H^0(X,\mathcal{O}_D) $ such that $ \ord_E \left( \prod_{f \in B} f \right) > 0 $ for all irreducible components $E$ of $D$ with $ P \in E(\overline{k}) $. An effective divisor $D$ is \ita{large} if some positive integral linear combination of its irreducible components is very large.
  \end{defn}

  \begin{theorem}[Main Result II, Corvaja-Zannier-Levin for countable function fields] \label{t:corvaja zannier function fields}
  	
  	Let $X$ be a projective variety over $\oQ$ and let $D$ be a very large divisor on $X$.
  	Then there is a proper closed subscheme $ Z \sbe X $ with the following property.
  	
  	For every countable algebraically closed field \ki of characteristic 0, for every algebraic function field $ F $over $k$ (as defined in Subsection \ref{ss:height over functions fields}), for every finite set of places $ T \sbe F$ and for every set of \dtint points $R\sbe (X \setminus D)(F)$ (as defined in Subsection \ref{ss:binding or bounding integral points}),
  	there is subset $ R^\prime \subseteq R $ with the same Zariski closure $ \overline{R} = \overline{R^\prime} $ such that $ h \circ \phi_D $ is bounded on $ R^\prime \setminus Z $.
  \end{theorem}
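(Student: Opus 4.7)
The strategy is to run the Corvaja--Zannier--Levin argument for the pseudo-arithmetic hyperbolicity of complements of very large divisors, but with Wang's function field Subspace Theorem in place of Schmidt's. Fix a $k$-basis $f_0,\dots,f_{N-1}$ of $\Lc(D)$, giving the rational map $\phi_D\colon X\dashrightarrow \PP^{N-1}$, so that $h\circ\phi_D$ is the height attached to this linear system. Write $D=E_1+\dots+E_r$ for the decomposition of $D$ into irreducible components.

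\emph{Step 1: local bases from the very large property.} For every $P\in|D|$, Definition \ref{def:large} supplies a $k$-basis $B_P\subset\Lc(D)$ such that $\ord_{E_i}\!\bigl(\prod_{f\in B_P}f\bigr)>0$ for every component $E_i$ of $D$ containing $P$. Only finitely many such subsets of $\Lc(D)$ occur, and the locus on $D$ where a fixed subset $B$ satisfies the very-large condition is constructible. By Noetherianity of $X$, a finite list $B_1,\dots,B_s$ of bases, together with locally closed subsets $U_1,\dots,U_s\subset |D|$ covering $|D|$, encodes the hypothesis: for $P\in U_\alpha$, the basis $B_\alpha$ has the very-large property at $P$.

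\emph{Step 2: the Subspace Theorem inequality.} Let $x\in R\subseteq(X\setminus D)(F)$ be a \dtint point. At each place $v$ of $F$ the point $x$ specialises to some $\tilde{x}_v\in X(\overline{k(v)})$; by $T$-integrality, if $\tilde{x}_v\in |D|$, then necessarily $v\in T$. For each such pair $(x,v)$, choose $\alpha=\alpha(x,v)$ with $\tilde{x}_v\in U_\alpha$ (take any $\alpha$ otherwise), and express each $f\in B_\alpha$ as a linear form $L_{f,\alpha}$ in $f_0,\dots,f_{N-1}$. Developing $f$ in local equations of the components through $\tilde{x}_v$ and invoking $\ord_{E_i}(\prod_{f\in B_\alpha}f)>0$ produces, as in \cite[Thm.~8.3A]{Levin}, a uniform $\varepsilon>0$ and a local inequality
\[
  \sum_{f\in B_\alpha}\lambda_v\!\bigl(L_{f,\alpha}(\phi_D(x))\bigr)\;\ge\;(1+\varepsilon)\,N\,\lambda_{D,v}(x)-C_v,
\]
with $C_v=0$ off a finite set of places. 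Summing over $v$ and using that $\sum_v\lambda_{D,v}(x)$ equals $h_D(x)=h(\phi_D(x))$ up to $O(1)$ for $T$-integral $x$ yields
\[
  \sum_{v\in T}\sum_{f\in B_{\alpha(v)}}\lambda_v\!\bigl(L_{f,\alpha(v)}(\phi_D(x))\bigr)\;\ge\;(N+\varepsilon')\,h(\phi_D(x))-O(1),
\]
which is precisely the input hypothesis of Wang's Subspace Theorem for the point $\phi_D(x)\in\PP^{N-1}(F)$ against the finite family of linear forms $\{L_{f,\alpha}\}$.

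\emph{Step 3: Wang's Subspace Theorem and extraction of $R'$.} Wang's function field Subspace Theorem asserts that, apart from the solutions of bounded height, all the points $\phi_D(x)$ for $x\in R$ satisfying the above inequality lie in a finite union $H_1\cup\cdots\cup H_M$ of hyperplanes of $\PP^{N-1}$ defined over $\overline{F}$. Set $Z\subseteq X$ to be the union of the indeterminacy locus of $\phi_D$ together with the preimages $\phi_D^{-1}(H_j)$ intersected with $X$ (keeping only those whose intersection is a proper closed subscheme of $X$); if some $\phi_D^{-1}(H_j)$ equals $X$, it can be discarded because then $\phi_D$ factors through $H_j$ and we may restart the argument on that component. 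The points of $R$ falling in the ``bounded height'' alternative contribute to $R'\setminus Z$. The remaining points of $R$ lie inside the union of the hyperplanes $\phi_D^{-1}(H_j)$; here the countability of $k$ (and hence of $F$) is used to enumerate the countably many irreducible components that can possibly occur and to discard, by a diagonal extraction, a subset contained in a thin union of proper subvarieties of $\overline{R}$, producing $R'\subseteq R$ with $\overline{R'}=\overline{R}$ on which $h\circ\phi_D$ is bounded off $Z$.

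\emph{Main obstacle.} The delicate step is Step 2: one must choose, for each pair $(x,v)$, the correct local basis $B_{\alpha(v)}$ matched to the combinatorics of which components $E_i$ pass through $\tilde{x}_v$, and then upgrade the qualitative condition $\ord_{E_i}(\prod f)>0$ into a quantitative, uniform gain $\varepsilon>0$ in the Subspace Theorem inequality. A secondary subtlety is that Wang's theorem bounds heights only of non-degenerate solutions, so the output is naturally a subset $R'$ rather than $R$ itself; it is here that the countability hypothesis on $k$ enters, and relaxing it is exactly what will later require the Evertse--Gy\H{o}ry specialisation argument to reach arbitrary finitely generated fields.
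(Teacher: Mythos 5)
Your sketch has the right overall shape—Wang's function-field Subspace Theorem applied to a large collection of linear forms coming from the ``very large'' bases—but it is missing the two devices that the paper actually turns on, and without them the argument has genuine gaps.

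\textbf{Independence of $Z$.} The theorem asserts that the closed subscheme $Z\subseteq X$ can be chosen \emph{before} $k$, $F$, $T$ and $R$ are given. Your construction takes $Z$ to be the union of preimages of hyperplanes ``defined over $\overline F$'' supplied by Wang's theorem \emph{after} fixing $F$; that produces a $Z$ depending on $F$. The paper avoids this by exploiting the precise form of Wang's statement: the exceptional locus $Y$ is a finite union of linear subspaces that can be built from the input hyperplanes $\mathcal H$ using only spans and intersections, and $Y$ is independent of $T$ and $\varepsilon$. Since $\mathcal H$ is defined over $\oQ$ (the $\psi_{j,i}$ and $\phi_i$ live in $V(D)$, a $\oQ$-space), the resulting $Y$, and hence $Z$, is defined over $\oQ$ and is fixed once and for all. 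Your remark about possibly having $\phi_D^{-1}(H_j)=X$ is a red herring: the $\phi_i$ form a basis of $V(D)$, so $\phi_D(X)$ is nondegenerate in $\PP^n$ and every $\phi_D^{-1}(H_j)$ is automatically proper.

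\textbf{Bounding the local terms and the role of countability.} In Step 2 you write that ``developing $f$ in local equations \dots produces, as in [Levin, Thm.~8.3A], a uniform $\varepsilon>0$ and a local inequality \dots with $C_v=0$ off a finite set of places.'' For places $v\in T$ this is exactly the nontrivial point, and the number-field argument does not transfer verbatim: for an algebraic function field $F$ over an infinite field $k$, the completion $F_p$ has infinite residue field, so $\PP^n(F_p)$ is not locally compact and one cannot get a uniform bound on the local Weil contribution at $p\in T$ by a compactness covering. The paper resolves this via Lemma \ref{l:converging dense sequence}: using the countability of $F$, it extracts a subsequence $R'\subseteq R$ that is simultaneously Zariski dense in $\overline R$ and $p$-adically convergent for each $p\in T$ to a limit $Q_p\in X(F_p)$. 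Then for each $p\in T$ one picks a single basis $B_j$ adapted to $Q_p$ (either $Q_p\notin D$ or, using the very-large property, $\ord_E(\prod\psi_{j,i})>0$ for every component $E$ through $Q_p$), and the local function $f_p$ is bounded on $R'$ simply because it converges to its value at $Q_p$. This is precisely where countability enters—\emph{not}, as you suggest, to enumerate irreducible components for a diagonal discard. Your proposal has no substitute for this step, so the claimed uniform lower bound on $\sum_{p\in T}f_p$ is unsupported, and the Subspace Theorem hypothesis is not established for points in $R$ of unbounded height.

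In short: the proposal reads Levin's proof across with Wang in place of Schmidt, but the function-field setting requires both (i) the constructibility/base-field structure of Wang's exceptional set in order to fix $Z$ over $\oQ$, and (ii) the $p$-adically convergent Zariski-dense subsequence to replace compactness in the local estimates. Both are absent in the proposal and both are essential to the paper's argument.
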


For the proof, we appeal to the function field version of the Subspace Theorem \cite{AnWang, Wan04} and ``re-do'' some of the arguments in \cite{Levin}. 

In the second step, we apply a method by Evertse and Gy\H ory \cite[Chapter~8]{Eve15}. They studied the unit equation over finitely generated domains. Given 
\begin{enumerate}
	\item an effective bound for the number of solutions over $\OKs$ for any  $K$ and any finite set of places $S$, and
	\item an effective height bound for the solutions over function fields,  
\end{enumerate}
they showed that there is an effective bound for the number of solutions over finitely generated domains. 

To explain their method, we consider the following example. Let $ A = \ZZ[t,1/(1+t^2)] $, and let $R\sbe A$ be a subset. 
Then $ A $ is contained in the function field $ F = \oQ(t) $. For $ p,q \in \ZZ[t] $, $q \neq 0$, with $\mathrm{gcd}(p,q) = 1 $ and $ p/q \in R $ the height is given as 
$$ H_F^{aff}(p/q) = \max\{\deg(p),\deg(q)\} .$$
So bounding the height bounds the degree of possible denominators and numerators. 

Furthermore, evaluating at $u\in \ZZ $  defines a specialization map $ \psi_u\colon A \to \ZZ[1/(1+u^2)] $. 
If $ \psi_u(R) $ is finite for sufficiently many $ u \in \ZZ $, then the set $R$ is finite since each element of $R$ has to interpolate these points.
By extracting their key arguments, we can show that their method works in a more general setting. This culminates in the following result.

\begin{theorem}[Main Result III, Evertse-Gy\H ory method on affine varieties]\label{thm:EvertseGyoryAffine}
	Let $X\sbe \Aa^n_{\oQ}$ be an affine variety over $ \oQ $,
	let $K$ be a number field, let $S$ be a finite set of places of $K$, let $ \mathcal{X} \sbe \Aa^n_{ \OKs}$ be a model for $X$ over $ \OKs$. Let $A\spe \OKs$ be a $\ZZ$-finitely generated integral domain with quotient field $L$, and
	let $ R \sbe \Xm(A) $ be a subset. Assume that 
	\begin{enumerate}
		\item $X$ is arithmetically hyperbolic over $ \oQ $, and
		\item for each  finite set of function fields $ \{ F_1, \dots, F_t \} $ with $ F_i \supseteq L$,
		there is a subset $ R^\prime \sbe R $ with the same Zariski closure $ \overline{R^\prime} = \overline{R} $ 
		such that $ h_{F_i}^{aff} $ is bounded on $ R^\prime \sbe X_{F_j}(F_j) $.
	\end{enumerate} 
	Then  $ R $ is finite. 
\end{theorem}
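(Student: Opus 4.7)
The plan is to carry out the Evertse--Gy\H ory specialization method sketched in the introduction. The overall idea is to use the function field height bounds from hypothesis~(2) to control the degrees of the coordinates of points in $R$ when viewed as rational functions in a transcendence basis of $L/K$, and then use specialization to reduce to the finiteness of $\OKs$-points furnished by hypothesis~(1). Writing $A = \OKs[z_1,\dots,z_q]$ and, after relabeling, taking $z_1,\dots,z_d$ to be a transcendence basis of $L$ over $K$ (with $z_{d+1},\dots,z_q$ algebraic over $K(z_1,\dots,z_d)$), for each $j = 1,\dots,d$ I would set $E_j := K(z_1,\dots,z_{j-1},z_{j+1},\dots,z_d)$ and view $L$ as an algebraic function field of transcendence degree one over $E_j$; call this $F_j$. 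Hypothesis~(2) applied to the family $\{F_1,\dots,F_d\}$ then produces a subset $R' \sbe R$ with $\overline{R'} = \overline{R}$ such that $h^{\mathrm{aff}}_{F_j}$ is bounded on $R'$ for every $j$. Unwinding the affine height over $F_j$, this bounds the $z_j$-degrees of numerators and denominators of the coordinates of points of $R'$ when those coordinates are expressed as rational functions in $z_j$ with coefficients in $E_j$.

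Next, I would specialize. For a tuple $u = (u_1,\dots,u_d) \in \Qbar^d$ avoiding a suitable proper Zariski-closed ``bad'' locus (coming from the common denominators of the coordinates of points of $R'$, the discriminants of the minimal polynomials of $z_{d+1},\dots,z_q$, and the defining equations of the model $\mathcal{X}$), the substitution $z_j \mapsto u_j$ extends to a ring homomorphism $\psi_u \colon A \to \OKt$ for some number field $K' \supset K$ and some finite set of places $T$ of $K'$. By the model-independence of arithmetic hyperbolicity \cite[Lemma~4.8]{JLalg}, hypothesis~(1) then implies that $\psi_u(R') \sbe \mathcal{X}(\OKt)$ is a finite set.

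To conclude, since the coordinates of points of $R'$ are rational functions in $z_1,\dots,z_d$ of bounded multidegree with denominators drawn from a common finite list, a standard Vandermonde-type interpolation argument shows that finitely many carefully chosen tuples $u$ suffice to make the joint evaluation map $R' \to \prod_u \psi_u(R')$ injective. Combined with the finiteness of each $\psi_u(R')$ from the previous step, this forces $R'$ to be finite; since $\overline{R} = \overline{R'}$ is then a finite union of closed points, $R \sbe \overline{R}$ is itself finite. I expect the main obstacle to be the precise bookkeeping in this final step: simultaneously arranging the specializations $\psi_u$ to be defined on all coordinates of all points of $R'$, while ensuring that the height bound from hypothesis~(2) translates to enough rigidity (a bounded multidegree together with a common denominator) to force injectivity of the joint evaluation map for a finite family of $u$'s. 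This interplay between the normalizations of the $F_j$-heights and the ring-theoretic presentation of $A$ over $\OKs$ is the technical heart of the Evertse--Gy\H ory approach.
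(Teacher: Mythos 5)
Your proposal follows essentially the same strategy as the paper: use the function field height bounds from hypothesis~(2) to bound degrees, use specialization maps to number rings, and use arithmetic hyperbolicity from hypothesis~(1) (via model independence) to obtain finiteness of specialization images. The paper's proof is much shorter because it outsources the technical heart to the machinery set up in its Section~4, in particular to the chain: bounded function-field heights $\Rightarrow$ bounded $\overline{\deg}$ (Corollary~\ref{cor:functionField}), and then bounded $\overline{\deg}$ plus finiteness (hence bounded height) of specializations $\Rightarrow$ bounded $\overline{h}$ via Evertse--Gy\H{o}ry's Lemma~8.5.6 \cite{Eve15}, followed by the combined Northcott property (Lemma~\ref{l:deg+h has Northcott property}). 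Your plan replaces this last stretch with a direct ``injectivity of the joint evaluation map'' argument, which is a genuinely different route for the final step.

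That route is plausible but the bookkeeping you flag as the ``technical heart'' is where the real content lies, and your sketch leaves two gaps. First, the claim that coordinates of points in $R'$ have ``denominators drawn from a common finite list'' does not follow from the height bound alone --- the height bound gives a \emph{degree} bound, not a finite set of denominators. What actually makes the specialization maps uniformly defined is the structural fact, emphasized in the paper's construction of $B = A_0[f^{-1},y]$ and the single polynomial $H = G_d\cdot\mathrm{disc}(G)\cdot f$, that all of $A$ lives in a ring with a \emph{single} controlled denominator $f$; you should use this rather than an unjustified finite list. Second, your setup keeps $z_{d+1},\dots,z_q$ as algebraic elements but never explains how the specializations act on them; the paper needs the primitive element $y$ and all $d$ conjugate maps $\psi_{u,i}$ (for $i=1,\dots,d$) precisely to handle this, and an interpolation argument using only one choice of root per $u$ is not obviously sufficient to force injectivity. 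The Vandermonde-type argument you have in mind does work in principle once bounded degree is established and the specializations are set up over $B$ rather than $A$, but as written the proposal asserts rather than proves the key interpolation step, which is exactly what \cite[Lemma~8.5.6]{Eve15} supplies in the paper.
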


Consequently, we prove the following result on the persistence conjecture. 

\begin{corollary}[Main Result IV, Evidence for the persistence conjecture] \label{cor:EvertseGyoryAffine}
	Let $X\sbe \Aa^n_{\oQ}$ be an arithmetically hyperbolic affine variety over $ \oQ $,
	let $K$ be a number field, let $S$ be a finite set of places of $K$, let $ \mathcal{X} \sbe \Aa^n_{ \OKs}$ be a model for $X$ over $ \OKs$. 
	Assume that for every  $\ZZ$-finitely generated integral domain $A\spe \OKs$   with quotient field $L$ and every finite set of function fields $ \{ F_1, \dots, F_t \} $ with $ F_i \supseteq L$, there is a subset $ R^\prime \sbe \mathcal{X}(A) $ with $ \overline{R^\prime} = \overline{\mathcal{X}(A)} $ 
		such that $ h_{F_i}^{aff} $ is bounded on $ R^\prime \sbe X_{F_j}(F_j) $.
	Then  $ X $ is absolutely arithmetically hyperbolic. 
\end{corollary}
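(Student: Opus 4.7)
The plan is to deduce the corollary directly from Theorem \ref{thm:EvertseGyoryAffine}: the standing hypothesis of the corollary is engineered to be exactly condition (2) of that theorem, applied to the specific set $R = \Xm(A)$.

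To prove that $X$ is absolutely arithmetically hyperbolic, I would fix an algebraically closed field $k \supseteq \oQ$ and show that $X_k$ is arithmetically hyperbolic over $k$. Since $\Xm$ base-changes to a model of $X_k$ over the image of $\OKs$ in $k$, it is enough to check that $\Xm(A_0)$ is finite for every $\ZZ$-finitely generated subring $A_0 \sbe k$. I would replace $A_0$ by $A := A_0[\OKs] \sbe k$ (using the fixed embedding $\OKs \sbe \oQ \sbe k$) so that $A \supseteq \OKs$; this enlargement only enlarges the set of integral points $\Xm(A_0) \sbe \Xm(A)$, and $A$ remains $\ZZ$-finitely generated because $\OKs$ is.

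With $L$ the quotient field of $A$, I would set $R := \Xm(A)$ and verify the two hypotheses of Theorem \ref{thm:EvertseGyoryAffine} for this $R$. Condition (1) is nothing but the assumed arithmetic hyperbolicity of $X$ over $\oQ$. For condition (2), given any finite set of function fields $F_1, \dots, F_t$ with $F_i \supseteq L$, the hypothesis of the corollary (applied to the present pair $(A,L)$ and this collection of function fields) supplies a subset $R' \sbe \Xm(A) = R$ with $\overline{R'} = \overline{R}$ on which each affine height $h_{F_i}^{aff}$ is bounded, which is word-for-word what condition (2) requires.

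Theorem \ref{thm:EvertseGyoryAffine} then yields that $R = \Xm(A)$ is finite. Since $A_0 \sbe k$ was arbitrary, $X_k$ is arithmetically hyperbolic over $k$; and since $k \supseteq \oQ$ was arbitrary, $X$ is absolutely arithmetically hyperbolic. There is no substantive obstacle in this argument: the corollary is essentially a repackaging of Theorem \ref{thm:EvertseGyoryAffine} in which the generic set $R$ is specialised to the full set of integral points $\Xm(A)$. The only even mildly non-trivial move is the replacement $A_0 \leadsto A_0[\OKs]$, which is justified simply by monotonicity of integral points under ring enlargement.
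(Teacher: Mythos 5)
Your proof is correct and matches the paper's (implicit) argument: the paper presents the corollary as an immediate consequence of Theorem~\ref{thm:EvertseGyoryAffine} without supplying a separate proof, and your deduction — unwinding the definition of absolute arithmetic hyperbolicity, reducing to finiteness of $\Xm(A')$ for finitely generated $A' \supseteq \OKs$ inside an arbitrary algebraically closed $k$, and applying the theorem to $R = \Xm(A')$ with the corollary's hypothesis supplying condition~(2) — is exactly the intended routine specialization.
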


\subsection*{Acknowledgements}
 
I would like to thank Ariyan Javanpeykar. He introduced me to Lang-Vojta's conjecture. I am very grateful for many inspiring discussion and his help in writing this article. I gratefully acknowledge the support of
SFB/Transregio 45.

\section{Height} \label{s:height}
 
Following \cite[Part~B]{Sil00}, in this section, we introduce the notion of height on projective varieties over number fields and function fields. 

\subsection{Height on projective space over a number field }

Let $K$ be a number field. The \emph{set of places} $M_K $ is the union of the \emph{set of finite places} $M_K^0$ and the \emph{set of infinite places} $M_K^\infty$. The set $M_K^0$ is the set of all non-zero prime ideals  $ \idp \sbe \OK $, and the set $M_K^\infty $ is the set of all real places, i.e., embeddings $ \sigma\colon K \to \RR $ and complex places, i.e., pairs of complex conjugated embeddings $ \tau,\bar{\tau}\colon K \to \CC$. Each place $ p \in M_K $ comes with an associated \emph{absolute value} $ \vert\vert . \vert\vert_p \colon K \to \RR $ given by 
\begin{eqnarray*}
	\vert\vert x \vert\vert_p = &\left(N_{K/\QQ} (\idp) \right)^{-\ord_{\idp}(x)}   & \text{ if }   p = \idp \in M_K^0, \\
	\vert\vert x \vert\vert_p = &\vert \sigma (x) \vert  & \text{ if }  p = \sigma  \text{ is real, and} \\
 	\vert\vert x \vert\vert_p = &\vert \tau(x) \vert^2  & \text{ if }  p = (\tau,\bar{\tau}) \text{ is complex}  
\end{eqnarray*}
for $ x \in K $. 
For  a point $ Q = (x_0:\ldots:x_n) \in \PP^n(K) $, we define the \emph{height} 
$$
 	H_K(Q) =   \prod_{p \in M_K} \max_i
 	 \{ \vert\vert x_i \vert\vert_p  \} . 
$$
We note that the height function over number fields has the following useful property; for proof see \cite[Theorem~B.2.3]{Sil00}.

\begin{theorem}[Northcott Property] \label{t:northcott property}
	Let $ K $ be a number field. If $H_K$ is bounded on a subset $ R \sbe \PP^n(K) $, then $ R $ is finite. 
\end{theorem}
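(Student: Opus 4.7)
The plan is to prove Northcott's theorem by reducing it to a finiteness statement about lattice points in a bounded region, following the classical argument.

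First, using the finiteness of the class number of $\OK$, for each $P \in R \sbe \PP^n(K)$ I would choose a representative $(x_0:\ldots:x_n)$ with $x_i \in \OK$ and with the fractional ideal $\mathfrak{a} = (x_0, \ldots, x_n)$ lying in a fixed, finite set of integral representatives of the ideal class group of $K$. For such representatives, the non-archimedean contribution to the height factors as
\[
\prod_{p \in M_K^0} \max_i \vert\vert x_i \vert\vert_p = N(\mathfrak{a})^{-1},
\]
and thus lies in a finite set of positive values as $\mathfrak{a}$ ranges over the chosen representatives. Consequently, the hypothesis that $H_K$ is bounded on $R$ forces the archimedean product $\prod_{v \in M_K^\infty} \max_i \vert\vert x_i \vert\vert_v$ to be uniformly bounded above by a constant $C$ depending only on the height bound and $K$.

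Next, since the projective representative $(x_0:\ldots:x_n)$ is only defined up to multiplication by $\OK^*$, I would further rescale using Dirichlet's unit theorem. The logarithmic embedding identifies $\OK^*$ with a lattice of full rank in the trace-zero hyperplane of $\RR^{|M_K^\infty|}$, so for each $P$ one can pick $u \in \OK^*$ such that, after replacing $(x_0,\ldots,x_n)$ by $(ux_0,\ldots,ux_n)$, each individual value $\vert\vert x_i \vert\vert_v$ for every archimedean $v$ is bounded by a constant depending only on $C$ and $K$ (not merely their product). This step is the principal obstacle of the proof.

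Finally, each ideal $\mathfrak{a}$ in the finite set of class representatives is a lattice of full rank inside $K \otimes_\QQ \RR \cong \RR^{[K:\QQ]}$ under the archimedean embeddings. Since any bounded region in this Euclidean space contains only finitely many lattice points, the set of tuples $(x_0,\ldots,x_n) \in \mathfrak{a}^{n+1}$ satisfying our uniform archimedean bounds is finite, and hence its image in $\PP^n(K)$ is finite. This contains $R$, completing the argument.
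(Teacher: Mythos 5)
Your proof is correct, but note that the paper does not prove this statement at all: it simply cites \cite[Theorem~B.2.3]{Sil00}. The argument given there is different in spirit from yours. Hindry--Silverman first reduce to $\PP^1$ (i.e.\ to elements of $\Qbar$) using the fact that each coordinate ratio has height at most $H(P)$, and then bound the heights of the coefficients of the minimal polynomial of $\alpha\in\Qbar$ in terms of $H(\alpha)$ and $[\QQ(\alpha):\QQ]$; this leaves only finitely many possible minimal polynomials. That route proves the stronger statement that there are only finitely many $P\in\PP^n(\Qbar)$ of bounded height and bounded degree over $\QQ$, of which the fixed-$K$ statement is a special case. Your route works directly over the fixed number field $K$ via geometry of numbers: finiteness of the class number normalizes the ideal $(x_0,\ldots,x_n)$, the height bound then bounds the archimedean product, Dirichlet's unit theorem lets you bound each archimedean factor separately, and lattice-point finiteness in a bounded box of $K\otimes_\QQ\RR$ finishes. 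The step you flag as the principal obstacle does go through, but requires one extra observation you leave implicit: the archimedean product $\prod_{v\in M_K^\infty}\max_i \vert\vert x_i\vert\vert_v$ is bounded \emph{below} by $1$ as well as above (since the $x_i$ lie in a fixed integral ideal and $H_K\geq 1$ by the product formula), so the vector of logarithms has bounded trace; it therefore differs from a vector in the trace-zero hyperplane by a bounded multiple of $(1,\ldots,1)$, and the trace-zero part can be brought into a fixed fundamental domain of the unit lattice. Once this is spelled out, your argument is complete; it gives the fixed-$K$ version, which is exactly what the statement requires, though it does not by itself give the uniform-over-degree strengthening that the cited reference establishes.
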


\subsection{Height on projective space over a function field}\label{ss:height over functions fields}

Let $k$ be an algebraically closed field of characteristic zero. An \emph{algebraic function field} over $k$ is a field extension $F \spe k $ of transcendence degree $1$. Equivalently, $F = K(C)$ is the function field of a smooth projective curve $C$  over $k$. The \emph{set of places on $F$} is the set of all closed points $ M_F = C(k) $. Note that every algebraic function field over $k$ is a finite field extension of $ k(t) $ and vice versa. For $ p \in M_F $, the \emph{absolute value associated to $p$} is the function   
$ \vert\vert . \vert\vert_p \colon F \to \RR $ given by 
$$
	\vert\vert x \vert\vert_p = e^{-\ord_p(x)}
$$
for $x\in F$. For 
$ Q = (f_0:\ldots:f_n) \in \PP^n(F)$, we define the \emph{height} 
$$
H_F(Q) =   \prod_{p \in M_F} \max_i
\{ \vert\vert f_i \vert\vert_p  \} .
$$
\subsection{Weil's height machine} 
Let $L = \QQ $ or $ L = k(t) $. 
For any finite field extension $ K \spe L $, we defined the height function $ H_K$ above. 
The \emph{absolute height} is the function $ H\colon \PP^n\left(\overline{L}\right) \to \RR $, that maps a point $ Q = (x_0:\ldots:x_n) \in \PP^n\left(\overline{L}\right) $ to $ H_K(Q)^{1/[K:L]}$, where $K\spe L$ is a finite field extension such that $ x_i \in K$ for all $i$. 

Consider the embedding $\iota\colon \Aa^{n} \to \PP^n\colon (x_1,\dots,x_n) \to (1:x_1: \ldots: x_n) $.
For each height function, there is an associated \emph{affine height function}
$H_K^{aff} := H_K \circ \iota $, $ H^{aff} := H \circ \iota $ defined on points in affine space.
Furthermore, for each height function, there is an associated \emph{logarithmic height function}
$h_K := \log \circ H_K $, $h := \log \circ H $, $h_K^{aff} := \log \circ H_K^{aff} $, $h^{aff} := \log \circ H^{aff} $.

As explained in \cite[Theorem~B.3.2, Remark~B.3.2.1, Theorem~B.10.4]{Sil00}, there is the following way of defining height functions on projective varieties.

\begin{theorem}[Weil's height machine]\label{t:weils height machine}
 For each projective variety $X$ over $\overline{L} $ and each Cartier divisor $D$ on $V$, there is an associated function
$$
	h_{X,D}\colon X\left( \overline{L} \right) \to  \RR
$$ 
such that the following properties are satisfied ($O(1) $ denotes some bounded function on $ X\left(\overline{L} \right)$).
\begin{enumerate}
	\item Let $ H \sbe \PP^n $ be a hyperplane, and let $ h $ be the absolute logarithmic height on projective space. Then $ h_{\PP^n,H}  = h  + O(1)$. 
	\item  Let $ \phi \colon X \to Y$ be a morphism of projective varieties over $ \overline{L} $, and let $ D \in \CaDiv(Y) $. Then
	$ h_{X,\phi^*D}  = h_{Y,D} \circ \phi   + O(1).$
	\item Let $ D,E \in \CaDiv(X) $. Then
	$ h_{X,D+E}  = h_{X,D}  + h_{X,E} + O(1). $
	\item  Let $ D,E \in \CaDiv(X) $ with $ D \sim E $ linearly equivalent. Then
	$ h_{X,D}  = h_{X,E}  + O(1).   $
	\item If $ D  \in \CaDiv(X) $ is base point free, then
	$ h_{X,D} = h \circ \phi_D + O(1). $
\end{enumerate}
\end{theorem}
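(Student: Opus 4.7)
The plan is the standard three-step construction of the height machine: define the height first for very ample divisors, extend to all Cartier divisors by writing each as a difference of two very ample ones, and then verify the five compatibilities as formal consequences of a single basis-change lemma. Throughout, $O(1)$ denotes a bounded function on $X(\overline{L})$, so at every stage one must check that constructions are well-defined only modulo such an error.

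\textbf{Step 1 (very ample case).} Given a very ample $D$ on $X$, fix a basis $s_0,\dots,s_N$ of $H^0(X,\mathcal{O}(D))$ and let $\phi_D\colon X\hookrightarrow \PP^N$ be the induced closed immersion. Set $h_{X,D}:=h\circ\phi_D$. The core lemma is that this is independent of the choice of basis up to $O(1)$: two bases differ by some $g\in\mathrm{GL}_{N+1}(\overline{L})$, and using the product formula in $K\supseteq L$ finite, together with the observation that only finitely many places $v\in M_K$ satisfy $\max_{i,j}\|g_{ij}\|_v\neq 1$, one gets $h\circ g = h+O(1)$ uniformly on $\PP^N(\overline{L})$. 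This proves (1) directly, since for a hyperplane $H\subseteq\PP^n$ the basis $x_0,\dots,x_n$ of $H^0(\PP^n,\mathcal{O}(1))$ makes $\phi_H=\mathrm{id}$, and it also gives (5), because when $D$ is merely base-point-free the sections still define a morphism $\phi_D\colon X\to\PP^N$ and the definition carries over verbatim.

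\textbf{Step 2 (extension and properties (2)--(4)).} For arbitrary $D\in \CaDiv(X)$, fix a very ample $A$ and pick $n\gg 0$ so that $D+nA$ is very ample; set $h_{X,D}:=h_{X,D+nA}-h_{X,nA}$, well-defined modulo $O(1)$ by Step~1 applied to each summand. For (4), two linearly equivalent very ample divisors produce closed immersions into the same $\PP^N$ that differ by an element of $\mathrm{GL}_{N+1}$, so the basis-change lemma applies; the general case is immediate. For (3) with $D,E$ very ample, the sections $s_is_j'$ of $\mathcal{O}(D+E)$ realise $\phi_{D+E}$ as the composition of $\phi_D\times\phi_E$ with the Segre map, for which one checks directly that $h\circ \mathrm{Segre}=h\oplus h$; a decomposition into differences of very ample divisors then yields additivity in full generality. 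For (2), if $D$ is very ample on $Y$ with embedding $\phi_D\colon Y\hookrightarrow\PP^N$, then $\phi_D\circ\phi\colon X\to\PP^N$ is the morphism defined by $\phi^*s_0,\dots,\phi^*s_N\in H^0(X,\mathcal{O}(\phi^*D))$, and the equality $h_{X,\phi^*D}=h_{Y,D}\circ\phi+O(1)$ follows; the non-very-ample case reduces to this by subtraction.

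\textbf{Expected main obstacle.} Essentially all the content is concentrated in the basis-change estimate $h\circ g=h+O(1)$, which must be proved uniformly in the point of $\PP^N(\overline{L})$ and which is the place where the product formula for both number fields and algebraic function fields enters. Once this lemma is in hand, every item in the statement is a formal manipulation using that every Cartier divisor on a projective variety is a difference of two very ample divisors and that a very ample (respectively base-point-free) linear system defines a closed immersion (respectively morphism) into projective space.
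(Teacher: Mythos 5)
The paper does not prove this statement itself; it simply cites Hindry--Silverman [Sil00, Thm.~B.3.2, Rem.~B.3.2.1, Thm.~B.10.4], and your proposal reconstructs exactly the standard three-step argument given there (very ample case via a closed immersion, extension by differences, verification of the axioms). The one point to tighten: the basis-change estimate for $g\in\mathrm{GL}_{N+1}(\overline{L})$ alone is not quite enough, since you also need to compare heights coming from different very ample \emph{subspaces} $V,V'\subseteq H^0(X,\OO(D))$ (equivalently, to handle the incomplete linear system arising in the Segre step for property (3)); this requires controlling linear projections whose center misses $\phi(X)$, which is the content of [Sil00, Thm.~B.3.1], and the well-definedness of $h_{X,D}:=h_{X,D+nA}-h_{X,nA}$ should be noted to rest on first establishing (3) for very ample divisors.
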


\section{Corvaja-Zannier for countable function fields}

In this section, we will prove Theorem \ref{t:corvaja zannier function fields}.
Let \ki be a countable algebraically closed field of characteristic 0. Note that $\oQ \sbe k$. Let $ F $ be an algebraic function field over $k$ as defined in Subsection \ref{ss:height over functions fields}. 
Let $X$ be a projective variety over $\oQ$ and let $D$ be a very large divisor on $X$.
We choose a basis  $ \phi_0, \dots, \phi_n $ of $ V(D) := \mathrm{H}^0(X,\mathcal{O}_X(D)) $, where $n = \dim_{\oQ} \mathrm{H}^0(X,\Oc_D) - 1 $. Let $ \phi_D\colon X \to \PP_{\oQ}^n $ denote the associated rational map. 

\subsection{Construction of the exceptional locus} 
\label{ss:exceptional locus}
We will start by constructing a proper closed subscheme $ Z \sbe X $ depending only on $D$ that "contains almost all $D$-integral points". 
Since $D$ is very large, there is a finite index set $J$ and, for each $j\in J $, there is a basis $ B_j = \{\psi_{j,0}, \dots, \psi_{j,n} \} $ of $ V(D) $ satisfying the following. Given any point $P \in D\left(\overline{F}\right)$, there is an index $ j \in J $ such that, for all irreducible components $E$ of $D$ that contain $P$, we have   
$$ \ord_E \left( \prod_{\psi \in B_j} \psi \right) > 0. $$
We can take for example $ J = D\left( \overline{F} \right)/ \sim $, where $ P \sim Q $ if $P$ and $Q$ are contained in all the same irreducible components of $D$. 

For each $j \in J$ and $i\in \{0,\dots,n\}$, there is a linear form $ \Lambda_{j,i} \in \oQ[x_0,\dots,x_n]_1 $ such that $ \psi_{j,i} = \Lambda_{j,i}(\phi_0,\dots,\phi_n) $. Let $ H_{j,i} \sbe \PP^n_{\oQ} $ denote the hyperplane determined by $\Lambda_{j,i} = 0$ and let $H_i \sbe \PP^n_{\oQ}$ denote the hyperplane determined by $ x_i = 0 $. Let $ \mathcal{H} = \{ H_{j,i}\}_{j,i} \cup \{H_i\}_i $.
   
We are going to apply Wang's version of Schmidt's subspace theorem (see \cite[p.~821]{Wan04} or \cite{AnWang}) on the collection $\mathcal{H}$. For its statement, we need the following terminology. 
Let $H \sbe \PP^n_F $ be a hyperplane given by a linear form $ \Lambda $ and let $v_p = \ord_p $ denote valuation associated to some place $p \in M_F$. Then the Weil function associated to $(H,p)$ is the map 
$$
\lambda_{H,p}\colon \mathbb{P}^n(F)  \to \RR
$$
where for a point $ P \in \mathbb{P}^n(F) $  with coordinates $ (x_0,\dots,x_n) \in F^{n+1} \setminus \{ 0\} $, we set 
$$
\lambda_{H,p}(P) 
=	 v_p(\Lambda(x_0,\dots,x_n)) - \min\limits_{0\leq i \leq n} \{ v_p(x_i) \}. 
$$
\begin{theorem}[Wang's subspace theorem for function fields] \label{thm:sstWang}
	Let $F$ be a function field and let $ \mathcal{H} $ be a finite set of hyperplanes in $ \mathbb{P}_F^n $.  
	Then there is a finite union of proper linear subspaces $Y\sbe \PP_F^n$
	that may be constructed from elements of $ \mathcal{H} $ using only operations $ <.,.> $ and $ \cap $, 
	satisfying the following.
	
	For all finite sets of places $ T \sbe M_F$ and all $ \epsilon > 0  $, 
	there are constants $ C,C' \in \RR$ such that 
	for any $ P \in \mathbb{P}^n(F)  $, at least one of the following statements holds.
	\begin{enumerate}
		\item The point $P$ lies in $Y$.
		\item The height of $P$ satisfies the inequality $$ h_F(P) \leq C.$$
		\item The height of $P$ satisfies the inequality
		$$
		\sum\limits_{p \in T}	\max_I \sum_{H \in I} \lambda_{H,p}(P) \leq (n+1+\epsilon)h_F(P) + C^\prime,
		$$
		where the maximum is taken over all subsets
		$ I = \{H_1, \dots, H_m \} \sbe \mathcal{H} $ such that the linear forms defining the $ H_i $ are linearly independent.
	\end{enumerate}
\end{theorem}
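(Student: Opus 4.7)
The plan is to deduce this statement from the function field Subspace Theorem as established by Wang \cite{Wan04} and An--Wang \cite{AnWang}. The standard formulation treats hyperplanes in general position and produces an exceptional set given as a finite union of proper linear subspaces of $\PP^n_F$, but does not make the combinatorial origin of those subspaces explicit. The content of the reformulation here is twofold: it drops the assumption that $\mathcal{H}$ be in general position, and it insists that the exceptional set $Y$ be built from $\mathcal{H}$ using only the operations $\langle \cdot, \cdot \rangle$ and $\cap$.

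First I would stratify $\PP^n_F$ by the flats of the arrangement $\mathcal{H}$, i.e., by the intersections of subfamilies of $\mathcal{H}$. Each such flat is manifestly of the allowed combinatorial form. I would then argue by downward induction on the dimension of the flats. On the open stratum (the complement in $\PP^n_F$ of the union of proper flats), the height inequality in (3) is obtained by applying the standard theorem to maximal linearly independent subfamilies of $\mathcal{H}$ that are in general position along that stratum; combining these with a pigeonhole argument on the finite family $\mathcal{H}$ controls the maximum over subsets $I$ appearing in (3), and the resulting exceptional subspaces are joins of subsets of $\mathcal{H}$, hence of the allowed form. For each proper flat $L$ of the arrangement, the hyperplanes of $\mathcal{H}$ that do not contain $L$ restrict to a new arrangement inside $L$, and the inductive hypothesis applied to $L$ produces an exceptional subset of $L$ that is, by construction, built from $\mathcal{H}$ via joins and intersections.

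Taking $Y$ to be the union of all these exceptional subsets across the strata yields the desired description. The height bound in (3) then follows on the complement of $Y$ by a case analysis: for points on the open stratum one uses the direct application of Wang's theorem for the in-general-position subfamilies, while points lying on a proper flat $L$ but outside $Y$ fall under the inductive conclusion for $L$.

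The main obstacle will be the careful bookkeeping required to ensure that the exceptional subspaces produced at each stage of the induction are really expressible in terms of $\langle \cdot, \cdot \rangle$ and $\cap$ applied to members of the original $\mathcal{H}$; in particular, one needs to verify that the linear subspaces furnished by the standard function field subspace theorem in the general-position case can be chosen as joins of members of $\mathcal{H}$, rather than as arbitrary linear subspaces defined over $F$. A secondary subtlety is tracking the interaction of the $\varepsilon \cdot h_F(P)$ error term with the stratification and with the maximum over linearly independent $I$, so that a single $\varepsilon$ and a single pair of constants $C, C'$ work uniformly across strata.
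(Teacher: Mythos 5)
The paper offers no proof of this theorem at all: it is quoted as a known result, with the reader directed to Wang~\cite{Wan04} (p.~821) and An--Wang~\cite{AnWang}. So there is no ``paper's own proof'' to compare against, and the correct move here is simply to cite the reference rather than to re-derive the statement.

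As for your sketch, the ``main obstacle'' you flag at the end is not a bookkeeping issue but the entire mathematical content, and your reduction does not overcome it. You propose to start from a ``standard formulation'' in which the exceptional set is an arbitrary finite union of proper linear subspaces and to upgrade this, via a stratification by flats of the arrangement and downward induction, to the statement that $Y$ can be taken to be built from $\mathcal{H}$ using only $\langle\cdot,\cdot\rangle$ and $\cap$. This does not follow: when you apply the weaker theorem on the open stratum or on a flat $L$, it hands you some finite union of proper subspaces over which you have no structural control, and restricting to flats and recursing does nothing to force those subspaces to be joins or intersections of members of $\mathcal{H}$. The fact that the exceptional set can be chosen in this combinatorially explicit way is precisely the additional, nontrivial ``effective'' content of Vojta's and Evertse--Ferretti's refinements over number fields and of Wang's theorem over function fields; it has to be imported from those sources, not deduced from the ineffective statement. (By contrast, the other feature you single out --- dropping the general-position hypothesis --- really is a cosmetic reformulation, since taking the maximum over linearly independent subfamilies $I\subseteq\mathcal{H}$ absorbs degenerate configurations automatically; this part of the statement does reduce to the general-position case by a standard argument.) In short, either cite Wang/An--Wang for the full statement as the paper does, or, if you want a self-contained proof, you would need to reproduce Wang's effective argument, not bootstrap from the ineffective version.
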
 

Let $ Y \sbe \PP^n $ be the union of linear subspaces yielded by Theorem \ref{thm:sstWang} applied to the collection $\mathcal{H}$
above. Let $ Z \sbe X $ be the union of the Zariski closure of $ \phi_D\inv(Y) $ and the locus of indeterminacy of $ \phi_D $.
Note that considering the way it was constructed, we see $Z$ is again a scheme over $\oQ$.

\subsection{Binding or bounding $D$-integral points}\label{ss:binding or bounding integral points}

For a finite subset of places $ T \subset M_F $, we define
$$
\OFt= \{ x \in F \,\vert\, v_p(x) \geq 0 \text{ for all } p\in M_F\setminus T \}.
$$
Note that the subring $ \OFt \subseteq F $ is integrally closed. 
A subset $R\sbe (X \setminus D)(F) $ is called a set of \emph{
	\dtint points}
if there is a model $\Wm$ for $ W = X \setminus D $ over $ \OFt $ such that $ R \sbe  \Wm(\OFt) \sbe (X\setminus D)(F) $.
We claim the following. 

\begin{lemma}\label{l:1.5}
	Let $R\sbe (X \setminus D)(F) $ be a set of \dtint points. Then $ R $ contains a subset $ R^\prime \subseteq R $ with the same Zariski closure $ \overline{R} = \overline{R^\prime} $ such that $ h \circ \phi_D $ is bounded on $ R^\prime \setminus Z $, where $Z$ is the exceptional locus constructed in Subsection \ref{ss:exceptional locus}.
\end{lemma}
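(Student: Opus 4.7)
The strategy is to push $R$ forward via $\phi_D$, apply Wang's function field subspace theorem (Theorem \ref{thm:sstWang}) to the image together with the collection of hyperplanes $\mathcal{H}$ constructed in Subsection \ref{ss:exceptional locus}, and exploit the very large hypothesis on $D$ combined with the \dtint condition to rule out the lower-bound alternative of the subspace theorem on a Zariski-dense subset.

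The first step is to pass to the projective side. For $P \in R \setminus Z$ the rational map $\phi_D$ is defined at $P$, and by the very construction of $Z$ the image $Q := \phi_D(P)$ avoids the linear exceptional locus $Y$ produced by Theorem \ref{thm:sstWang}. Therefore alternative (1) of the subspace theorem is ruled out, so each such $P$ either satisfies the desired bound $h_F(Q) \leq C$ directly, or
\[
\sum_{p \in T} \max_I \sum_{H \in I} \lambda_{H,p}(Q) \;\leq\; (n+1+\epsilon)\, h_F(Q) + C^\prime.
\]
The plan is then to produce a matching lower bound of the form $(n+1+\delta)h_F(Q) - C_0$ with $\delta > 0$, which rules out alternative (3) once $\epsilon < \delta$ is chosen and forces the desired height boundedness.

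For the lower bound, the very large property of $D$ provides, for each $P$ and each $p \in T$, an index $j = j(P,p) \in J$ such that $\ord_E(\prod_i \psi_{j,i}) > 0$ for every irreducible component $E$ of $D$ through the reduction $P_p$. The hyperplanes $\{H_{j,0}, \ldots, H_{j,n}\} \subseteq \mathcal{H}$ are linearly independent because $\{\psi_{j,0},\ldots,\psi_{j,n}\}$ is a basis of $V(D)$, so this is an admissible choice in the inner maximum. Using the coordinates $Q = (\phi_0(P):\cdots:\phi_n(P))$ and writing $\Psi_j := \prod_i \psi_{j,i}$, a direct expansion gives
\[
\sum_i \lambda_{H_{j,i},p}(Q) \;=\; v_p\!\left(\Psi_j(P)\right) - (n+1)\min_k v_p(\phi_k(P)).
\]
Integrality of $P$ forces $\min_k v_p(\phi_k(P)) = 0$ for all $p \notin T$, so summing the second term over $T$ recovers exactly $(n+1)h_F(Q)$. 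The excess $\delta h_F(Q)$ comes from the first term: at places $p \in T$ where $P_p$ lies on components $E$ of $D$, the strict positivity $\ord_E(\Psi_j) > 0$ combined with the fact that the $p$-adic contribution of local equations of $E$ to $h_F(Q)$ (via the class $(n+1)D$) is strictly positive yields a strictly positive extra contribution, while at places $p \in T$ with $P_p \notin D$ the term $v_p(\Psi_j(P))$ is $O(1)$ by integrality. Collecting coordinate-hyperplane contributions $H_i$ in $\mathcal{H}$ ensures the lower bound dominates $(n+1+\delta)h_F(Q)$ up to an additive constant.

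The main obstacle is that the index $j(P,p)$ depends on $P$, so the lower bound is uniform only on the stratum of $R$ where the assignment $(p \mapsto j_p)_{p \in T}$ is fixed. Since there are at most $|J|^{|T|}$ such strata, some stratum $R^\prime \subseteq R$ satisfies $\overline{R^\prime} = \overline{R}$. Applying Theorem \ref{thm:sstWang} to $\phi_D(R^\prime \setminus Z) \subseteq \PP^n(F) \setminus Y$ with $\epsilon = \delta/2$ then forces $h_F \circ \phi_D$, and hence $h \circ \phi_D$, to be bounded on $R^\prime \setminus Z$, which is the desired conclusion.
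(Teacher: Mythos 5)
Your approach is genuinely different from the paper's, and in a way that is worth remarking on. The paper proves the lemma by first reducing to $\overline{R}$ irreducible and then invoking Lemma~\ref{l:converging dense sequence}: using the countability of $F$, it extracts a Zariski-dense \emph{subsequence} of $R$ that converges $p$-adically to a limit $Q_p\in X(F_p)$ for every $p\in T$; the lower bound for the Weil-function sum is then obtained by a soft convergence argument, because the rational function $\phi_{i'}\Psi_j^M$ (with $j$ chosen once and for all from $Q_p$) is regular at $Q_p$ and its values along the sequence therefore stabilize. You instead propose to stratify $R$ by the finitely many possible assignments $(p\mapsto j_p)_{p\in T}$ and choose a Zariski-dense stratum by pigeonhole. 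This sidesteps the diagonal argument entirely, and in particular does not use the countability of $k$, which — if the estimate were completed — would strengthen the statement. That is an interesting observation.

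However, the quantitative core of your lower bound is asserted rather than proved, and as written it does not close. You reduce to needing
\[
\sum_{p\in T} v_p\bigl(\Psi_{j_p}(P)\bigr) \;\geq\; \delta\, h_F(\phi_D(P)) - C_0
\]
and justify this only by saying that the strict positivity $\ord_E(\Psi_{j_p})>0$ "yields a strictly positive extra contribution." What the regularity of $\Psi_{j_p}$ at the reduction $P_p$ actually gives is merely $v_p(\Psi_{j_p}(P))\geq 0$, which produces the trivial lower bound $(n+1)h_F(Q)-C$ — exactly the coefficient the Subspace Theorem also gives on the other side, so no contradiction and no height bound results. The essential missing ingredient is the choice $\delta = \epsilon = 1/M$ with $M = \max\{-\ord_E(\phi_i)\}$ and the observation that, for every component $E$ of $D$ through $P_p$,
\[
\ord_E\!\left(\phi_{i'}\cdot \Psi_{j_p}^M\right) \;=\; \ord_E(\phi_{i'}) + M\,\ord_E(\Psi_{j_p}) \;\geq\; -M + M = 0,
\]
so that $\phi_{i'}\Psi_{j_p}^M$ is regular at $P_p$ (its pole divisor is supported on $D$ and misses $P_p$), whence $v_p\bigl(\phi_{i'}\Psi_{j_p}^M(P)\bigr)\geq 0$. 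Dividing by $M$ turns this into exactly the quantity $f_p(Q)=\max_I\sum_H\lambda_{H,p}+(n+1+\epsilon)\min_k v_p(\phi_k)\geq 0$ that the paper bounds, and only this combination — Weil functions \emph{together} with the $\epsilon\cdot\min$ term — produces the excess $\epsilon$ in the coefficient. Without making this explicit your "$\delta h_F(Q)$ from the first term" is a gap, not a step. You also implicitly use that the reduction $P_p\in X(k)$ is well-defined and that $v_p(g(P))\geq 0$ whenever the rational function $g$ is regular at $P_p$; this is true because $X$ is projective over $\overline{\QQ}\subseteq k$ so that $X\times_{\overline{\QQ}}\OO_{F,p}$ is a model with purely horizontal pole divisors, but it should be said. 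Finally, the pigeonhole step needs the initial reduction to $\overline{R}$ irreducible, which you do not perform.
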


In the remainder of this section, we will prove Lemma \ref{l:1.5}.
The set $R$ is a finite union of subsets that have an irreducible Zariski closure. 
We can consider each one of these subsets individually and therefore assume that the Zariski closure $ \overline{R}$ is irreducible. For each place  $p \in M_F$, there is an absolute value $ \vert\vert . \vert\vert_p = exp \circ(-v_p(.)) $ on $F$.
We fix some embedding $ X_F \sbe \PP_F^m $. The absolute value $\vert\vert . \vert\vert_p$ defines a topology on projective space $\PP_F^m(F)$ and so on $X(F)$. 
By $ F_p $ we denote the completion of $F$ with respect to $ \vert\vert . \vert\vert_p $.
\begin{lemma} \label{l:converging dense sequence} If the Zariski closure $ \overline{R}$ is irreducible, then there is a sequence $ (Q_i)_{i \in \NN } $ of points in $R$ and, for each $ p \in T $, there is a point $ Q_p \in X(F_p) $ such that
	\begin{enumerate}
		\item $ \{ Q_i \vert i \in \NN \} $ is Zariski dense in $ \overline{R} $, and
		\item for all $p\in T $, the sequence  $Q_i$ converges towards $ Q_p $ in the  $\vert\vert . \vert\vert_p$-topology.
	\end{enumerate}
\end{lemma}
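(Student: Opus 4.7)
The plan is a diagonal extraction combining the countability of $R$ with sequential compactness of $X(F_p)$ for the finitely many places $p \in T$. Since $k$ is countable and $F$ is a finitely generated extension of $k$, the field $F$ is countable, hence so is $R \sbe X(F)$. If $R$ is finite then by irreducibility $\overline{R}$ is a single point and any constant sequence works, so I assume $R$ is infinite throughout.

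First, the proper closed subvarieties of $\overline{R}$ defined over $F$ form a countable collection: $\overline{R}$ embeds as a closed subscheme of some $\PP^N_F$ and its closed subvarieties correspond to radical ideals in a quotient of $F[x_0,\ldots,x_N]$, which is a countable Noetherian ring in which every ideal is finitely generated, hence has only countably many ideals. Enumerate them as $Y_1, Y_2, \ldots$. I would then choose inductively $P_n \in R \setminus (Y_1 \cup \cdots \cup Y_n)$; this is possible because $\overline{R}$ is irreducible so $Y_1 \cup \cdots \cup Y_n \subsetneq \overline{R}$ and $R$ is Zariski dense in $\overline{R}$. The crucial robustness property of this construction is that \emph{any} infinite subsequence of $(P_n)$ is still Zariski dense in $\overline{R}$: for every proper closed subvariety $Y \subsetneq \overline{R}$ one has $Y = Y_m$ for some $m$, whence $P_n \notin Y$ for all $n \geq m$, so $Y$ can contain only finitely many terms. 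With this in hand, I would invoke sequential compactness of the finite product $\prod_{p \in T} X(F_p)$ to extract a subsequence $(Q_i) = (P_{n_i})$ converging simultaneously in every $X(F_p)$, with limits $Q_p \in X(F_p)$. By the robustness property, $(Q_i)$ is still Zariski dense in $\overline{R}$, so both conclusions of the lemma hold.

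The main obstacle is justifying the sequential compactness step. For a general countable algebraically closed ground field $k$, the completion $F_p$ is not locally compact and $\PP^N(F_p)$ is not sequentially compact in the naive $\vert\vert\cdot\vert\vert_p$-topology; already the sequence $(1 : n)_{n \in \NN} \sbe \PP^1(k((t)))$ has all pairwise distances equal to $1$. I would therefore run the extraction either via the compactness of the Berkovich analytification of $X$ over $F_p$, in which classical $F_p$-points form a dense and sequentially accessible subset, or by passing through an embedding of $k$ into a locally compact algebraically closed field, extracting the subsequence there and transporting the limits back. In either approach, what makes the diagonal selection feasible is the countability of $R$, which provides the first-countability of the ambient setup and thus distinguishes the present function-field situation from an arbitrary one.
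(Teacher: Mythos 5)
Your first paragraph is correct and in one respect improves on the paper's own argument: the ``robust'' sequence $(P_n)$, chosen so that $P_n$ avoids the first $n$ proper closed $F$-subvarieties of $\overline{R}$ in a fixed enumeration, has the property that \emph{every} infinite subsequence remains Zariski dense, which would let you handle all $p\in T$ by a single simultaneous extraction. The paper instead treats one place at a time---first locating a point $Q_p$ all of whose $p$-adic neighbourhoods meet $R$ in a Zariski-dense set, then building a sequence in $R$ converging to $Q_p$ while escaping an enumerated list of hypersurfaces---and closes with only ``Iterating this process for all other $p\in T$ finishes the proof''; your robustness property is exactly what makes such an iteration (or a one-shot extraction) safe.

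The step you flag as the main obstacle is, however, a genuine gap that your proposal does not close. You are right that $X(F_p)$ is not sequentially compact when the residue field $k$ is infinite; the constants $(1:n)$ give the obstruction. Note that the paper's proof hits the very same wall: it asserts ``The set $M$ is compact'' with no justification, and this fails for the same reason, since $\mathcal{O}_{F_p}$ is a disjoint union of infinitely many open cosets of the maximal ideal. Neither of the workarounds you sketch actually produces a $Q_p\in X(F_p)$: a sequence of $F_p$-points of a projective variety need not subconverge to an $F_p$-point inside the Berkovich analytification (your constant sequence converges there to the Gauss point of $\PP^{1,\mathrm{an}}$, which is not $F_p$-rational), and there is no locally compact non-archimedean algebraically closed field of characteristic zero into which to embed $k$. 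In fact the lemma as stated appears to fail outright: take $X=\PP^1$, $D=\{0\}+\{1\}+\{\infty\}$ (a very large divisor on $\PP^1$), $F=k(t)$, $T=\{0,\infty\}$, and $R=k^\ast\setminus\{1\}\subseteq\mathcal{W}(\OFt)$ for the obvious model $\mathcal{W}$ of $\PP^1\setminus D$. Then $\overline{R}=\PP^1$ is irreducible, yet distinct elements of $R$ sit at chordal $p$-adic distance $1$ from one another at every place $p$, so no Zariski-dense sequence in $R$ can converge in any $X(F_p)$. Any repair must either weaken the conclusion (allow $Q_p$ to be a point of the Berkovich analytification and adjust the use of $Q_p$ in the proof of Lemma~\ref{l:1.5}) or treat separately the case where $R$ has no classical accumulation point, where in fact the relevant heights are already bounded and Lemma~\ref{l:1.5} can be salvaged directly. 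Your write-up correctly locates the difficulty, but does not supply a working fix.
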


\begin{proof} 
	We fix some $ p \in T $. Let $ V = \overline{R} $ be the Zariski closure and let $ M \sbe X(F_p) $ be the closure of $R$ in the $\vert\vert . \vert\vert_p$-topology. The set $M$ is compact. 
	Hence we can find a point $ Q_p \in X(F_p)   $ such that for each  $\vert\vert . \vert\vert_p$-open  neighbourhood U of $ Q_p $, the set $ U \cap R $ is Zariski dense in $V\left(\overline{F}\right)$. 
	This is because otherwise $R$ would be the union of finitely many sets that are not Zariski dense in $ V\left(\overline{F}\right) $,
	which is not possible because $V$ is irreducible.
	
	Since $F$ is countable, there are countably many hypersurfaces of $ \PP^n_F $ not containing $V$. 
	Let $ (H_i)_{i \in \NN} $ be an enumeration of these. 
	Let $ U_1 \supseteq U_2 \supseteq \dots $ be a descending chain of $\vert\vert . \vert\vert_p$-open neighbourhoods of $Q_p$	with $ \bigcap U_i = \{ Q_p \} $. 
	Since $R$ is Zariski dense in $V\left(\overline{F}\right)$, the set $ R \cap U_i $ is not contained in $H_i\left(\overline{F}\right)$. 
	Choose $ Q_i \in R \cap U_i \setminus H_i $. 
	Then the sequence $ Q_i $ converges towards $ Q_p $ in $\vert\vert . \vert\vert_p$-topology and is Zariski dense in $V$. 
	
	Iterating this process for all other $ p \in T $ finishes the proof.
\end{proof}

We set $ R^\prime $ to be the sequence constructed in Lemma \ref{l:converging dense sequence}. Let $Z$ be as constructed in Subsection \ref{ss:exceptional locus}. Let $R'' = R' \setminus Z $. Then the rational map $ \phi_D $ is determined on $ R'' $. We have to show that the height $h$ is bounded on $ \phi_D(R'') $. Note if $ R'' $ is finite, we are done. Otherwise, $R''$ is a subsequence of $R'$ and has the same convergency properties as $R'$ described in Lemma \ref{l:converging dense sequence}.  

By Theorem \ref{thm:sstWang}, given $\epsilon>0$, we can find constants $C,C'$ such that for all $ Q \in \phi_D(R'') $ with $h_F(Q) > C$,
the inequality 
$$
\sum\limits_{p \in T}	\max_I \sum_{H \in I} \lambda_{H,p}(Q) \leq (n+1+\epsilon/2)h_F(Q) + C^\prime,
$$
is satisfied. Let $ C'' \in \RR $ be arbitrary. Then for $ P \in R'' $ with $ h_F(\phi_D(Q)) > \frac{2(C'-C'')}{\epsilon} $,
we have 
$$ (n+1+\epsilon/2)h_F(Q) + C^\prime < (n+1+\epsilon)h_F( Q) + C''. $$
That means by changing $ C $ if necessary we can choose $C'$ arbitrarily. 
Now since $ R'' $ is a set of \dtint points, there is an $ a \in \OFt $ 
so that for all $ Q \in R'' $ and $ i \in \{0,\dots,n \} $, we have $ a \phi_i(Q) \in \OFt $. 
This implies
\begin{align*}
	h_F( \phi_D(Q) ) = h_f(a\phi_D(Q)) 
	&= 		- 	\sum\limits_{p\in M_F} \min\limits\{ v_p(a\phi_0(Q)), \dots, v_p(a\phi_{n}(Q))  \}		
	\\		
	&\leq 	- 	\sum\limits_{p \in T} \min\limits\{ v_p(a\phi_0(Q)), \dots, v_p(a\phi_{n}(Q))  \}
	\\
	&\leq	- 	\sum\limits_{p \in T} \min\limits\{ v_p(\phi_0(Q)), \dots, v_p(\phi_{n}(Q))  \}
	-	\sum\limits_{p \in T} v_p(a)
\end{align*}
The term $ \sum\limits_{p \in T} v_p(a) $ is some constant. All together, we have seen that given $\epsilon>0$ and $ C' \in \RR $, we can find a constant $ C\in \RR $ such that for all $ Q \in R'' $, we either have 
\begin{equation} \label{e:bound}
		\sum\limits_{p \in T}	f_p(Q)  \leq  C^\prime,
\end{equation}
where 
$$
	f_p(Q) = \max_I \left( \sum_{H \in I} \lambda_{H,p}(\phi_D(Q)) \right) + (n+1+\epsilon) \cdot \min\limits\{ v_p(\phi_0(Q)), \dots, v_p(\phi_{n}(Q))  \}
,$$
or we have $ h_F(\phi_D(Q)) \leq C$.
In other words, this means if we find a lower bound for the left-hand-side of Inequality \ref{e:bound}, i.e., a constant $C'$ such that Inequality \ref{e:bound} is never satisfied, then we have bounded the height.

We put $$ M = \max \left\{ - ord_E( \phi_j )\,\middle\vert\, \text{E is an irreducible component of D, j } \right\} $$ and choose $ \epsilon = 1/M $. We can bound each function $f_p$ separately. So fix $ p \in T $, let $ Q_p $ be as in Lemma \ref{l:converging dense sequence}.  

First assume  $ Q_p \not\in D(F_p) $.  Considering $ I = \{H_0,\dots, H_n \}$, we see \begin{align*}
	f_p(q) 
	&\geq \left( \sum_{i= 0}^n \lambda_{H_i,p}(\phi_D(Q)) \right) + (n+1+\epsilon) \cdot \min\limits \{ v_p(\phi_0(Q)), \dots, v_p(\phi_{n}(Q))  \} \\
	&=\left( \sum_{i= 0}^n  v_p(\phi_i(Q)) \right) + \epsilon \min\limits_{0\leq i \leq n} \{ v_p(\phi_i(Q)) \}.
\end{align*}
As all $ \phi_i $ have no pole at $ Q_p $, the right-hand-side converges to the value at $ Q_p $. In particular, it is bounded.

Now assume $ Q_p \in D(F_p) $. We choose $j \in J$ such that for each irreducible component $E$ of $D$ with $ Q_p \in E(F_p) $, we have 
$$ \ord_E \left( \prod_{\psi \in B_j} \psi \right) > 0. $$
Considering $ I = \{H_{j,0},\dots, H_{j,n}\} $, we see
\begin{align*}
	f_p(Q) 
	&\geq \left( \sum_{i= 0}^n \lambda_{H_{(j,i)},p}(\phi_D(Q)) \right) + (n+1+\epsilon) \cdot \min\limits \{ v_p(\phi_0(Q)), \dots, v_p(\phi_{n}(P))  \} \\
	&=\left( \sum_{i= 0}^n  v_p(\psi_{j,i}(Q)) \right) + \epsilon \min\limits_{0\leq i' \leq n} \{ v_p(\phi_{i'}(Q)) \}.
\end{align*}
For each irreducible component $E$ of $D$ with $ Q_p \in E(F_p)$, we have 
$$
ord_E \left( \phi_{i'} \cdot \left( \prod\limits_{i=0}^{n}  \psi_{j,i}\right)^M \right)
= 	ord_E(\phi_{i'}) + M ord_E \left(\prod\limits_{i=1}^{l(D)}  \psi_{j,i} \right)
\geq	- M + M = 0.
$$
This shows that $ f_p $ is bounded by the minimum of a finite set of functions that have no pole along $D$. As before by convergency reasons, this shows $f_p$ is bounded from below on $ R''$. This finishes the proof of Lemma \ref{l:1.5} and Theorem \ref{t:corvaja zannier function fields}.

\section{Evertse-Gy\H ory's method}

\label{Chapter4}

In this section, we review Evertse-Gy\H ory's method \cite[Chapter~8]{Eve15} and prove Theorem \ref{thm:EvertseGyoryAffine}. 

\subsection{Degree and height functions on finitely generated domains}\label{s:degree and height functions}

Let $A$ be a $\ZZ$-finitely generated domain. We choose a maximal number of algebraically independent elements $z_1,\dots,z_q \in A$. Then $A$ is an extension of $A_0 = \ZZ[z_1,\dots,z_q]$, and we can find elements $ \yt \in A $ that are algebraic over $ A_0 $ such that $ A = A_0[ \yt ] $. 
We denote the quotient field of $ A $ by $ L = Q(A) $ and the quotient field of $ A_0 $ by $ L_0 = Q(A_0) = \QQ(\zq) $.
Note that the field extension $ L/L_0 $ is finite. 
By the primitive element theorem, there is a $ y \in L $ such that $ L = L_0(y) $.
The primitive element $y$ has a minimal polynomial
$$ G(\zq)(x) = x^d + G_1(\zq) x^{d-1} + \dots + G_d(\zq) \in L_0[x]. $$ 
Let $ g \in A_0 $ be the common denominator of the rational functions $G_i \in L_0 $. By replacing $ y $ with $ y \cdot g$, we may assume that $ G \in A_0[x] $.
The elements $ 1, y, \dots, y^{d-1} $ form a basis of $ L $ as $ L_0 $ vector space. Therefore, for each $ \alpha \in L $, we can find 
$ Q_\alpha,\Pda \in A_0 $ (unique up to sign) with no common factor such that 
\begin{equation}\label{eq:presentation}
	\alpha = {Q_\alpha}^{-1} \sum\limits_{j=0}^{d-1}  P_{j,\alpha} y^j. 
\end{equation}
In particular, we can find such for $ \alpha \in \{ \yt \} $. We set $ f = \prod_{i=1}^t Q_{y_i} \in A_0$. 
Then there is an inclusion of rings
\begin{equation} \label{eq:niceA}
	A \sbe A_0[f^{-1}, y ] = : B.
\end{equation}
The ring $B$ has the same quotient field as $A$. For $ \alpha \in L $, we define
\begin{align*}
	\overline{\deg}(\alpha) &:=  \max \{ \deg(P_{\alpha,0}), \dots , \deg(P_{\alpha,d-1}), \deg(Q_\alpha) \}
	\\ 	\overline{h}(\alpha) &:=  \max \{ h^{aff}(P_{\alpha,0}), \dots , h^{aff}(P_{\alpha,d-1}), h^{aff}(Q_\alpha) \}
\end{align*}
where, for $ g = \sum_\mu c_\mu z_1^{\mu_1}\dots z_q^{\mu_q} \in A_0 $, we set 
$ \deg\left( g  \right)  = \max\{ \mu_1 + \dots + \mu_q | c_\mu \neq 0  \} $  
and
$h^{aff}(g) =   \sum_{p \in M_\QQ } \ln \max\limits_{\mu, c_\mu \neq 0} \{ 1, ||c_\mu||_p \} $.
Furthermore, for $ (\alpha_1, \dots \alpha_n) \in L^n $, we define: 
\begin{align*}
	\overline{\deg}(\alpha_1, \dots \alpha_n) &:=  \max \{ \odeg(\alpha_1), \dots , \odeg(\alpha_n) \}
	\\ 	\overline{h}(\alpha_1, \dots \alpha_n) &:=  \max \{ \oh(\alpha_1),\dots \oh(\alpha_n) \}
\end{align*}
These functions combined have the Northcott property.
\begin{lemma}\label{l:deg+h has Northcott property}
	Let $ R \sbe L^n $ be a subset such that $ \odeg $ and $ \oh $ are bounded on $R$. Then $ R $ is finite.
\end{lemma}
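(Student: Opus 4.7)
The plan is to reduce to the case $n = 1$ and then directly enumerate the finitely many polynomial data $(\Qa, \Pda)$ that can represent an $\alpha \in L$ with the prescribed bounds. Since $\odeg$ and $\oh$ on $L^n$ are defined coordinatewise via a maximum, it suffices to show that for each constant $C$ the set $\{\alpha \in L : \odeg(\alpha) \leq C,\ \oh(\alpha) \leq C\}$ is finite; the statement for $L^n$ then follows at once by taking an $n$-fold product.

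Fix $\alpha \in L$. By construction $\alpha$ is uniquely determined via \eqref{eq:presentation} by the tuple $(\Qa, \Pda) \in A_0^{d+1}$, which is itself unique up to a common sign. Each entry lies in $A_0 = \ZZ[\zq]$, and the bound $\odeg(\alpha) \leq C$ forces all of $\Qa, P_{0,\alpha}, \dots, P_{d-1,\alpha}$ to lie in the free $\ZZ$-submodule of $A_0$ spanned by monomials in $\zq$ of total degree at most $C$, which has finite rank $\binom{C+q}{q}$. Within this $\ZZ$-submodule, the bound $\oh(\alpha) \leq C$ cuts out only finitely many elements: because the coefficients $c_\mu$ are integers, all non-archimedean local factors of $h^{aff}$ are trivial, so the height bound collapses to an archimedean bound on $\max_\mu |c_\mu|$, which leaves only finitely many admissible coefficient vectors.

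Putting these two finiteness statements together, only finitely many tuples $(\Qa, \Pda)$ can satisfy the hypotheses, and hence only finitely many $\alpha$ do. I do not foresee any genuine obstacle, as the argument is essentially book-keeping, combining the finite-dimensionality of the degree-$\leq C$ part of $A_0$ with Northcott's property over $\ZZ$. The one point worth emphasizing is the role of the canonical no-common-factor presentation: without uniqueness of the tuple representing $\alpha$, bounding finitely many tuples would only control a redundant family of descriptions, whereas here each admissible tuple yields at most one $\alpha$.
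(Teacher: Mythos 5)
Your proof is correct and follows essentially the same approach as the paper: both arguments control the tuple $(\Qa,\Pda)$ by using the $\odeg$-bound to confine the polynomials to a fixed finite-rank $\ZZ$-submodule of $A_0$ and the $\oh$-bound to leave only finitely many admissible coefficient vectors. Your version is marginally more self-contained in that you unwind $h^{aff}$ for integer coefficients directly rather than invoking the general Northcott property (Theorem~\ref{t:northcott property}), but that is a cosmetic difference, not a different route.
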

\begin{proof}
	Let $ \alpha  = {Q_\alpha}^{-1} \sum_{j=0}^{d-1}  P_{j,\alpha} y^j \in L $ be a coordinate of some point in $R$. By Theorem \ref{t:northcott property},
	the boundedness of $ \oh $ on $R$ implies that there are only finitely many possibilities for the coefficients of $ \Qa, \Pao , \dots , \Pad  $.
	Since $ \odeg $ is bounded on $ R$, also the degree of these polynomials is bounded. 
\end{proof}

\subsection{Using embeddings into function fields to bound the degree} \label{ss:embeddings into function fields}

We may embed $L$ into function fields. We denote the $d$ many $L_0$-invariant embeddings of $ L $ into an algebraic closure of $ L_0$ by $ x \mapsto x^{(j)} $, where $ j \in \{1,\dots, d\} $.
For $ i \in \{1,\dots,q\} $, let $ k_i $ be an algebraic closure of $ \QQ(z_1,\dots,\hat{z_i},\dots,z_q) $. The notation $ \hat{z_i} $ means that we leave out $ z_i $. 
Then $ F_i = k_i(z_i,y^{(1)},\dots,y^{(d)}) $ is an algebraic function field of transcendence degree 1 over $ k_i $. 
Consider the $d$ many different embeddings of $L$ into $F_i$
\begin{equation} \label{eq:FFieldEmbeddings}
	\varphi_{i,j} \colon L \to F_i\colon z_i \mapsto z_i, \, y \mapsto y^{(j)}, \quad j \in \{1,\dots, d\}.
\end{equation}
Evertse and Gy\H ory showed the following; see \cite[Lemma~8.4.1]{Eve15}.

\begin{lemma} 
	\label{lem:degSmalerThanHeight}
	There is a constant $C > 0$ such that for all $ \alpha \in L $
	$$
	\overline{deg}(\alpha) \leq C + \sum\limits_{i=1}^q [F_i:k_i(z_i)]^{-1} \sum\limits_{j=1}^d h_{F_i} (\alpha^{(j)}).
	$$
\end{lemma}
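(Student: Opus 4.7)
The plan is to bound, for each $i$, the maximum $z_i$-degree of $Q_\alpha$ and the $P_{k,\alpha}$ in terms of the function field heights $h_{F_i}(\alpha^{(j)})$, and then combine these $q$ estimates via the elementary inequality $\deg(P) \leq \sum_i \deg_{z_i}(P)$ for $P \in \ZZ[z_1,\ldots,z_q]$.

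I will start by treating the $d$ identities $\alpha^{(j)} = Q_\alpha^{-1} \sum_{k=0}^{d-1} P_{k,\alpha}(y^{(j)})^k$ ($j = 1, \ldots, d$) as a linear system in the unknowns $P_{k,\alpha}/Q_\alpha$ and invert the $d \times d$ Vandermonde matrix $V = ((y^{(j)})^k)_{j,k}$ (nonsingular since the conjugates $y^{(j)}$ are distinct in characteristic zero). This gives $\beta_k := P_{k,\alpha}/Q_\alpha = \sum_{j=1}^d V^{-1}_{kj}\alpha^{(j)}$ as an $F_i$-linear combination of the conjugates. Applying the non-archimedean triangle inequality (for both sums and products) at each place of $F_i$ yields
\[
h_{F_i}\bigl((1:\beta_0:\ldots:\beta_{d-1})\bigr) \leq C_i' + \sum_{j=1}^d h_{F_i}(\alpha^{(j)}),
\]
where $C_i'$ is the $F_i$-height of the projective point whose coordinates are $1$ together with all entries of $V^{-1}$. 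This $C_i'$ depends only on $F_i$ and the $y^{(j)}$, hence is independent of $\alpha$.

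Next I translate this $F_i$-height into a $z_i$-degree. Since the projective point $(Q_\alpha : P_{0,\alpha} : \ldots : P_{d-1,\alpha}) = (1:\beta_0:\ldots:\beta_{d-1})$ has coordinates in $L_0 \sbe k_i(z_i) \sbe F_i$, the functoriality of heights under the finite extension $F_i / k_i(z_i)$ gives $h_{F_i}(\cdot) = [F_i:k_i(z_i)] \cdot h_{k_i(z_i)}(\cdot)$ on it. Moreover, for a projective point whose coordinates lie in $k_i[z_i]$ and have no common factor in $k_i[z_i]$, the height over $k_i(z_i)$ equals the maximum of the coordinate $z_i$-degrees. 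The only subtlety is verifying this coprimeness: starting from coprimeness of the tuple $(Q_\alpha, P_{0,\alpha}, \ldots, P_{d-1,\alpha})$ in $\ZZ[z_1,\ldots,z_q]$, Gauss's lemma applied to the UFD $\QQ[z_k : k \neq i]$ yields coprimeness in $\QQ(z_k : k \neq i)[z_i]$; any common factor in $k_i[z_i]$ would then produce a common root $c \in k_i$, whose minimal polynomial over $\QQ(z_k : k \neq i)$ would divide all of $Q_\alpha, P_{0,\alpha}, \ldots, P_{d-1,\alpha}$ in $\QQ(z_k : k \neq i)[z_i]$, a contradiction.

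Combining the previous two steps yields, for every $i = 1, \ldots, q$,
\[
\max_k\bigl(\deg_{z_i} Q_\alpha, \deg_{z_i} P_{k,\alpha}\bigr) \leq [F_i:k_i(z_i)]^{-1}\Bigl(C_i' + \sum_{j=1}^d h_{F_i}(\alpha^{(j)})\Bigr).
\]
Summing over $i$ and using $\deg(P) \leq \sum_i \deg_{z_i}(P)$ then produces the claimed inequality with $C := \sum_{i=1}^q [F_i:k_i(z_i)]^{-1} C_i'$. The main obstacle is the coprimeness step: if $(Q_\alpha, P_{0,\alpha}, \ldots, P_{d-1,\alpha})$ had a nontrivial common factor in $k_i[z_i]$, the $k_i(z_i)$-height would drop below the maximum $z_i$-degree and the chain of inequalities would collapse.
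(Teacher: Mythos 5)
Your proof is correct, and since the paper cites this result from Evertse–Győry \cite[Lemma~8.4.1]{Eve15} without reproducing the argument, there is no paper-internal proof to compare against. Your reconstruction follows what I believe is essentially the Evertse–Győry route: use the conjugate identities $\alpha^{(j)} = Q_\alpha^{-1}\sum_k P_{k,\alpha}(y^{(j)})^k$ together with Cramer's rule (here in explicit Vandermonde form) to express the $\beta_k = P_{k,\alpha}/Q_\alpha$ as fixed $F_i$-linear combinations of the $\alpha^{(j)}$; bound the $F_i$-height of the projective tuple $(1:\beta_0:\dots:\beta_{d-1}) = (Q_\alpha:P_{0,\alpha}:\dots:P_{d-1,\alpha})$ by a constant plus $\sum_j h_{F_i}(\alpha^{(j)})$; identify that height (after dividing by $[F_i:k_i(z_i)]$) with $\max_k\{\deg_{z_i}Q_\alpha,\deg_{z_i}P_{k,\alpha}\}$ via the standard description of the height on $k_i(z_i)$; and finally sum over $i$ using $\deg \leq \sum_i \deg_{z_i}$. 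The coprimeness verification you flag is indeed the one point that must not be skipped, and your Gauss-lemma argument handles it correctly: coprimeness in $\ZZ[z_1,\dots,z_q]$ passes to $\QQ(z_k:k\neq i)[z_i]$ by primitivity, and any nonconstant common factor in $k_i[z_i]$ would force a common root whose minimal polynomial over $\QQ(z_k:k\neq i)$ is then a common divisor, a contradiction. One cosmetic remark: your $C$ could in principle be $0$ if all entries of $V^{-1}$ are globally integral, but this is immaterial since one may replace $C$ by $\max\{C,1\}$.
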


\begin{corollary}\label{cor:functionField}
	Let $ R \sbe L^n $ be a subset such that  $ h^{aff}_{F_i} $  is bounded from above on 
	$$ \varphi_{i,j}(R) := \{ (\varphi_{i,j}(\alpha_1), \dots, \varphi_{i,j}(\alpha_n)) | (\alpha_1,\dots,\alpha_n) \in R\} \sbe F_i^n $$
	for all $ i \in \{1,\dots,q \} $ and $ j \in \{1, \dots d \} $.
	Then $ \odeg $ is bounded on $R$.
\end{corollary}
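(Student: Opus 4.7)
The strategy is to reduce the statement about tuples to the single-element inequality in Lemma \ref{lem:degSmalerThanHeight}, since $\odeg$ on $L^n$ is by definition the maximum of $\odeg$ over the coordinates. So it will suffice to uniformly bound $\odeg(\alpha_k)$ for each fixed $k \in \{1,\dots,n\}$ as $(\alpha_1,\dots,\alpha_n)$ ranges over $R$.

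To do this, I would first record the obvious coordinate-wise monotonicity of the affine height: for any tuple $\beta = (\beta_1,\dots,\beta_n) \in F_i^n$, unpacking the definition of $h_{F_i}^{aff}$ as $\sum_{p \in M_{F_i}} \log \max_{\ell}\{1, \vert\vert \beta_\ell \vert\vert_p\}$ immediately gives $h_{F_i}^{aff}(\beta_k) \leq h_{F_i}^{aff}(\beta_1,\dots,\beta_n)$ for each $k$, since a single-coordinate maximum is dominated by the maximum over all coordinates. Applying this with $\beta = (\varphi_{i,j}(\alpha_1),\dots,\varphi_{i,j}(\alpha_n))$, the hypothesis that $h_{F_i}^{aff}$ is bounded on $\varphi_{i,j}(R)$ yields a uniform upper bound on $h_{F_i}^{aff}(\varphi_{i,j}(\alpha_k)) = h_{F_i}(\varphi_{i,j}(\alpha_k))$ (these agree via the standard identification $\alpha \mapsto (1:\alpha) \in \PP^1$) as $\alpha_k$ varies, for every $i$ and $j$.

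Now I would apply Lemma \ref{lem:degSmalerThanHeight} to each $\alpha_k$: the constant $C$ and the extension degrees $[F_i:k_i(z_i)]$ are part of the ambient data and independent of the choice of $\alpha_k$, so the inequality
$$
\odeg(\alpha_k) \leq C + \sum_{i=1}^q [F_i:k_i(z_i)]^{-1} \sum_{j=1}^d h_{F_i}(\varphi_{i,j}(\alpha_k))
$$
combined with the uniform bound from the previous step produces a bound on $\odeg(\alpha_k)$ that is independent of the tuple from which $\alpha_k$ comes. Taking the maximum over $k \in \{1,\dots,n\}$ bounds $\odeg$ on $R$ itself, which is what is claimed.

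I do not anticipate any substantive obstacle: the corollary is essentially a bookkeeping extension of Lemma \ref{lem:degSmalerThanHeight} from elements of $L$ to tuples, together with the elementary observation that coordinate projection cannot increase the affine height. The only point warranting a moment of care is the distinction between $h_{F_i}$ and $h_{F_i}^{aff}$, which is harmless once one uses the standard embedding of $\Aa^1$ into $\PP^1$.
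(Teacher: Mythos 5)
Your argument is correct and matches the paper's proof essentially verbatim: both reduce to coordinates via the inequality $h_{F_i}^{aff}(\beta_k) \leq h_{F_i}^{aff}(\beta_1,\dots,\beta_n)$ and then invoke Lemma \ref{lem:degSmalerThanHeight} on each coordinate. The extra remark about identifying $h_{F_i}^{aff}$ with $h_{F_i}$ via $\alpha \mapsto (1:\alpha)$ is a reasonable clarification of a notational abuse that the paper leaves implicit, but it does not change the argument.
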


\begin{proof}
	Let $(x_1,\dots, x_n) \in \varphi_{i,j}(R)  $. For all $ j \in \{1,\dots,n\} $, we have
	\begin{align*}
		h^{aff}_{F_i} (x_1,\dots, x_n)  
		=	\sum_{p \in M_{F_i}} \max\{0,-v_p(x_1),\dots,-v_p(x_n) \} 
		\geq\sum_{p \in M_{F_i}} \max\{0,-v_p(x_j)\} 
		=  	h_{F_i}^{aff}(x_j). 
	\end{align*}		
	So the height is bounded on each coordinate. By Lemma \ref{lem:degSmalerThanHeight}, this implies that $ \odeg $ is bounded on each coordinate.
\end{proof}

\subsection{Specializations}\label{ss:specialization}

As we have seen in the Subsection \ref{s:degree and height functions}, the finitely generated domain $ A $ is contained 
in $ B = A_0[f^{-1},y] $, where $ f \in A_0 = \ZZ[\zq] $ and $ y $ is integral over $ A_0 $
with minimal polynomial
$$ G(\zq)(x) = x^d + G_1(\zq) x^{d-1} + \dots + G_d(\zq) \in A_0[x]. $$  

We want to define specialization maps $ B \to \overline{\QQ} $. 
The discriminant $ \disc(G) $ is a polynomial in the variables $ \zq $. Since $ G $ is irreducible and separable, this polynomial does not vanish entirely. We define
\begin{equation}\label{eq:H}
	H = G_d \cdot \text{disc}(G) \cdot f \in A_0.
\end{equation} 
Now for $ u \in \ZZ^q $ with $ H(u) \neq 0 $,
the polynomial $ G(u)(x) \in \ZZ[x] $ has $ d $ many distinct roots $ y_{u,1}, \dots, y_{u,d} \in \overline{\QQ} \setminus \{0\}  $ that are integral over $ \ZZ $, and  we have $ f(u) \in \ZZ \setminus \{ 0 \} $.
Now mapping $ z_j \mapsto u_j $ and $ y \mapsto y_{u,i} $ defines a map
\begin{equation}\label{eq:specialization}
	\psi_{u,i} \colon B \to \ZZ[f(u)^{-1},y_{u_i}] \sbe \mathit{O}_{K_{u,i},S_{u,i}} \sbe K_{u,i},
\end{equation}
where $ K_{i,u} = \QQ(y_{u,i}) $ is a number field and $ S_{u,i} $ is the set that contains all infinite places and the $  \idp \in M_{K_{u,i}}^0 $ with $ \ord_\idp(f(u)) > 0 $ or $ \ord_\idp(y_i(u)) < 0$.

\begin{rem}\label{rem:specialization induces iso}
	Let $K$ be a number field and let $S\sbe M_K$ be a finite such that there is an embedding $ \OKs \sbe B $.
	Then $$ \idp = \ker\left( \mathit{O}_{K,S} \xrightarrow{\psi_{u,i}} K_{u,i} \right)  \sbe \mathit{O}_{K,S}$$
	is a prime ideal. Hence either $ \idp = 0 $ or $ \idp $ contains some prime number $ p \in  \ZZ $.
	The latter is impossible, as $K_{u,i} $ is no $ (\ZZ/p\ZZ)$-algebra. Hence the specialization map $\psi_{u,i}$ induces an isomorphism of $K$ with some subfield of $ K_{u,i} $.
\end{rem}

We will use the following version of a result by Evertse and Gy\H ory.

\begin{proposition}\label{p:height + height bound = finite dimension 1}
	Let $ R \sbe B $ be a subset of bounded $ \odeg $,
	let $ N \in \NN $ be so big that $ \odeg(r) \leq N $ for all $ r \in R $ and $ deg(H) \leq N  $, and let 
	$$
	\mathcal{U}= \{u \in \ZZ^q | \max_i\{ |u_i| \} \leq N , H(u) \neq 0 \}.
	$$
	Furthermore, suppose the height function $ H_{K_{u,i}}^{aff} $ is bounded on $ \psi_{u,i}( R ) $ for all $ i \in \{1,\dots,d\}  $ and $ u \in \mathcal{U} $.
	Then $ R $ is finite.
\end{proposition}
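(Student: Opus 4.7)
The plan is to apply Lemma \ref{l:deg+h has Northcott property}: given the bound $\odeg(r) \leq N$ on $R$, I would reduce the problem to showing that $\oh$ is also bounded on $R$. This amounts to bounding the heights of the integer polynomials $Q_r, P_{j,r}$ arising in the canonical presentation $r = Q_r^{-1}\sum_j P_{j,r}\, y^j$ of each $r \in R$.

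First I would show that the denominator $Q_r$ can only take finitely many values: since $r \in B = A_0[f^{-1},y]$, the polynomial $Q_r$ divides some power of $f$, and as $A_0 = \ZZ[\zq]$ is a UFD with units $\pm 1$, the constraint $\deg Q_r \leq N$ leaves only finitely many such divisors. It therefore suffices to prove finiteness of $R_{Q_0} := \{r \in R : Q_r = Q_0\}$ for each of the finitely many choices of $Q_0$. Now fix such a $Q_0$ and $u \in \mathcal{U}$. Applying the $d$ specializations $\psi_{u,i}$ to $r = Q_0^{-1}\sum_j P_{j,r}\,y^j$ yields the linear system
$$
Q_0(u)\,\psi_{u,i}(r) = \sum_{j=0}^{d-1} P_{j,r}(u)\,y_{u,i}^{j}, \qquad i=1,\dots,d,
$$
whose coefficient matrix is the Vandermonde $(y_{u,i}^j)_{i,j}$. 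Since $H(u)\neq 0$ forces $\disc(G)(u)\neq 0$, this matrix is invertible; inversion expresses $P_{j,r}(u)$ as $Q_0(u)$ times a fixed ($r$-independent) linear combination of the $\psi_{u,i}(r)$. The hypothesis that $H^{aff}_{K_{u,i}}(\psi_{u,i}(r))$ is uniformly bounded in $r$, combined with $P_{j,r}(u) \in \ZZ$, then yields a uniform bound on $|P_{j,r}(u)|$ for each $u \in \mathcal{U}$ and each $j$.

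I would finish by reconstructing each $P_{j,r}$ from its bounded values on $\mathcal{U}$ by a Lagrange-style interpolation. The key claim is that the evaluation map $P \mapsto (P(u))_{u \in \mathcal{U}}$ is injective on $\{P \in \QQ[\zq] : \deg P \leq N\}$: indeed, if such a $P$ vanishes on $\mathcal{U}$, then $HP$ has degree $\leq 2N$ and vanishes on every integer point of $[-N,N]^q$ (either $u \in \mathcal{U}$ giving $P(u)=0$, or $H(u)=0$); an iterated one-variable Lagrange argument over the product set $\{-N,\dots,N\}^q$ of side length $2N+1$ then forces $HP=0$, whence $P=0$. Each $P_{j,r}\in\ZZ[\zq]$ is thus determined by its uniformly bounded values on $\mathcal{U}$ and so ranges over a finite subset of the $\ZZ$-lattice of integer polynomials of degree $\leq N$, giving finiteness of $R_{Q_0}$ and hence of $R$. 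I expect the interpolation to be the main obstacle: the whole argument hinges on the numerical compatibility that the integer cube of side $2N+1$ can absorb polynomials of degree $2N$ (the sum of the degree bound on $H$ and on $P$), which is precisely why the hypothesis requires $N$ to bound both $\sup_{r}\odeg(r)$ and $\deg(H)$ simultaneously.
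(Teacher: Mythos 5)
Your plan to reduce to bounding $\oh$ and then invoke Lemma~\ref{l:deg+h has Northcott property} is exactly the paper's structure, but the paper does the hard part (bounding $\oh$) by citing \cite[Lemma~8.5.6]{Eve15}, whereas you try to prove it directly. Unfortunately your direct argument has a genuine gap at the very first step.

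You claim that since $r\in B=A_0[f^{-1},y]$, the denominator $Q_r$ divides a power of $f$, and that because $A_0=\ZZ[\zq]$ is a UFD with units $\pm 1$, the bound $\deg Q_r\leq N$ leaves only finitely many possible $Q_r$. This is false: the power of $f$ depends on $r$, and the degree bound controls only the polynomial part of $Q_r$, not its integer content. Concretely, take $A=\ZZ[z,1/2,\sqrt{2}]$, so $A_0=\ZZ[z]$, $y=\sqrt 2$, $G(x)=x^2-2$, $f=2$, and $B=\ZZ[z,1/2,\sqrt 2]$. For $r=1/2^k$ one has $P_{0,r}=1$, $P_{1,r}=0$, $Q_r=2^k$, so $\odeg(r)=0$ for all $k$ while $Q_r$ ranges over infinitely many divisors of powers of $f$. (The specialization hypothesis does rule this particular $R$ out, since $H^{\mathrm{aff}}_{\QQ(\sqrt 2)}(1/2^k)$ is unbounded, but that is beside the point: your argument is supposed to \emph{deduce} finiteness of the set of denominators solely from the degree bound, and that deduction is unsound.) Without the reduction to a fixed $Q_0$, the subsequent Vandermonde/interpolation steps, which rely on writing $P_{j,r}(u)$ as $Q_0(u)$ times an $r$-independent combination of the $\psi_{u,i}(r)$, do not get off the ground.

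The later steps are otherwise fine as far as they go: $\disc(G)(u)\neq 0$ because $\disc(G)\mid H$ and $H(u)\neq 0$; $\psi_{u,i}(Q_r)=Q_r(u)\neq 0$ because $Q_r\mid f^m$ and $f(u)\neq 0$; Northcott gives finitely many values of $\psi_{u,i}(r)$; and the interpolation bound works because $HP$ has total degree $\leq 2N<2N+1=|\{-N,\dots,N\}|$ in each variable. But to repair the proof you would need to bound both $P_{j,r}(u)$ \emph{and} $Q_r(u)$ from the specialization height bound simultaneously, without first freezing $Q_r$; if you try the naive fix of comparing two elements $r,r'$ with the same specializations, the resulting polynomial $P_{j,r}Q_{r'}-P_{j,r'}Q_r$ has degree up to $2N$, and after multiplying by $H$ the interpolation cube of side $2N+1$ is too small. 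This is precisely the technical content that the paper delegates to Evertse--Gy\H{o}ry's Lemma~8.5.6, and it cannot be dispensed with by the degree bound alone.
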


\begin{proof}
	By \cite[Lemma~8.5.6]{Eve15}, $\overline{h}$ is bounded on $R$. The assertion now follows from 
	Lemma \ref{l:deg+h has Northcott property}.
\end{proof}

\begin{corollary}\label{c:height + height bound = finite dimension q}
	Let $ R \sbe B^n $ be a subset of bounded $ \odeg $, 
	let $ N \in \NN $ be so big that $ \odeg(r) \leq N $ for all $ r \in R $ and $ deg(H) \leq N  $, and let 
	$$
	\mathcal{U}= \{u \in \ZZ^q | \max_i\{ |u_i| \} \leq N , H(u) \neq 0 \}.
	$$
	Furthermore, suppose for all $ i \in \{1,\dots,d\} $ and $ u \in \mathcal{U} $, the height function $ H_{K_{u,i}}^{aff} $ is bounded on 
	$$ \{ (\psi_{u,i}(x_1),\dots,\psi_{u,i}(x_n))  | (\xn)\in R \}. $$
	Then $ R $ is finite.
\end{corollary}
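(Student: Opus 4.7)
The plan is to reduce Corollary \ref{c:height + height bound = finite dimension q} to the scalar case, Proposition \ref{p:height + height bound = finite dimension 1}, by projecting onto each coordinate. For $k \in \{1,\dots,n\}$, let $\pi_k\colon B^n \to B$ denote the $k$-th coordinate projection, and set $R_k := \pi_k(R) \subseteq B$. It then suffices to verify that each $R_k$ satisfies the hypotheses of Proposition \ref{p:height + height bound = finite dimension 1}, because $R \subseteq R_1 \times \cdots \times R_n$, and a subset of a finite product of finite sets is finite.

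First I would check that $\odeg$ is bounded on each $R_k$. This is immediate from the definition of $\odeg$ on tuples: for any $(x_1,\dots,x_n) \in R$ one has $\odeg(x_k) \leq \max_j \odeg(x_j) = \odeg(x_1,\dots,x_n) \leq N$, so the same integer $N$ in the statement works coordinate-wise.

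Next I would check that for every $u \in \mathcal{U}$ and every $i \in \{1,\dots,d\}$, the affine height $H_{K_{u,i}}^{aff}$ is bounded on $\psi_{u,i}(R_k)$. This follows from the same inequality used in the proof of Corollary \ref{cor:functionField}: for any place $p$ of $K_{u,i}$ and any $(y_1,\dots,y_n) \in K_{u,i}^n$,
\[
\max\{1,\|y_1\|_p,\dots,\|y_n\|_p\} \geq \max\{1,\|y_k\|_p\},
\]
so $H_{K_{u,i}}^{aff}(y_1,\dots,y_n) \geq H_{K_{u,i}}^{aff}(y_k)$. Applying this to $(y_1,\dots,y_n) = (\psi_{u,i}(x_1),\dots,\psi_{u,i}(x_n))$ with $(x_1,\dots,x_n) \in R$ shows that the given upper bound on $H_{K_{u,i}}^{aff}$ over the specialized tuples descends to an upper bound on $H_{K_{u,i}}^{aff}(\psi_{u,i}(x_k))$ for every $(x_1,\dots,x_n) \in R$, hence on $\psi_{u,i}(R_k)$.

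With both hypotheses of Proposition \ref{p:height + height bound = finite dimension 1} verified for each $R_k$, that proposition yields that each $R_k$ is finite, and therefore $R$ is finite. There is no real obstacle here; the corollary is a purely formal consequence of the one-dimensional statement together with the fact that the height and degree functions on tuples dominate those on the individual coordinates.
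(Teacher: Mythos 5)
Your proof is correct and follows essentially the same route as the paper: project onto each coordinate, observe that $\odeg$ descends coordinate-wise with the same bound, use the product-of-max inequality $H_{K_{u,i}}^{aff}(q_1,\dots,q_n)\geq H_{K_{u,i}}^{aff}(q_j)$ to transfer the height bound to each coordinate, and then invoke Proposition \ref{p:height + height bound = finite dimension 1}. Your write-up is merely more explicit than the paper's terse version in spelling out the projections $R_k$ and the final containment $R\subseteq R_1\times\cdots\times R_n$.
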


\begin{proof}
	When $ \odeg $ is bounded on $ R $ by $ N $, then $ \odeg $ is also bounded by $ N $ on each coordinate of every point in $ R $.
	Furthermore, for $ (q_1,\dots, q_n) \in K_{u,i}^n $, we have
	\begin{align*}	
		H_{K_{u,i}}^{aff}(q_1,\dots, q_n) = \prod\limits_{p \in M_{K_i}} \max \{ 1, ||q_1||_p, \dots, ||q_n||_p \} 
		\geq  \prod\limits_{p \in M_{K_i}} \max \{ 1, ||q_j||_p \} =  H_{K_{u,i}}^{aff}(q_j).
	\end{align*}
	Hence, the set of coordinates of points in $ R $ satisfies the assumptions of Proposition \ref{p:height + height bound = finite dimension 1}. So we are done.
\end{proof}

\subsection{Formulation of the method}

Let $ R \sbe A^n $ be a subset, where $A$ is a $\ZZ$-finitely domain such that
\begin{enumerate}
	\item for all embeddings into function fields 
	$ \varphi_{i,j}\colon A \to F_i $ (see Subsection \ref{ss:embeddings into function fields}), the height $ H_{F_i}^{aff} $ is bounded on $ \varphi_{i,j}(R) $, and
	\item for all specialization maps $ \psi_{u,i}\colon A \to \mathit{O}_{K_{u,i},S_{u,i}} $(see Subsection \ref{ss:specialization}), the height $ H_{K_{u,i}}^{aff} $ is bounded on $\psi_{u,i}(R)$.
\end{enumerate}

Then Evertse and Gy\H ory's results Corollary \ref{cor:functionField} and Corollary \ref{c:height + height bound = finite dimension q} combined prove the finiteness of $ R $. 

\begin{proof}[Proof of Theorem \ref{thm:EvertseGyoryAffine}]By Remark \ref{rem:specialization induces iso}, the set  $ \psi_{u,i}(R) $ is contained in some set of integral points of $ X' = X\otimes_{K,\psi_{u,i}} K_{u,i} $. The variety $X'$ is arithmetically hyperbolic as well; see \cite{JLalg}. Hence the sets  $ \psi_{u,i}(R) $ are all finite and therefore the height is bounded. 
Now we apply Evertse-Gy\H ory's method.
\end{proof}

\section{Absolute arithmetic hyperbolicity}

In this section, we will prove Theorem \ref{thm:I}.

\begin{lemma}\label{lem:levinLarge}
	Let $m$ be an integer, let $k$ be a field of characteristic zero, let $X$ be a projective variety over $k$, and let $ D = \sum_{i=1}^r D_i  $ be a sum of effective, big and nef divisors on $ X $ 
	such that at most of $m$ of the $ D_i $ meet in any point, where $ r > 2m \dim(X) $. 
	Then there is a non-singular projective variety $ X^\prime $ over $k$ and a birational morphism $ \pi\colon X^\prime \to X $
	and positive integers $ b_i $ such that
	\begin{enumerate}
		\item the divisor $ E = \sum_{i=1}^r b_i D_i $ has the same support as $ D $,
		\item the pullback $E^\prime = \pi^* E $ is very large,
		\item the associated rational map $ \phi_{E^\prime} $ is birational onto its image and
		\item the variety $ X^\prime$ and every irreducible component of $ E^\prime $ is non-singular.  [Lemma~4.14]
	\end{enumerate}     
\end{lemma}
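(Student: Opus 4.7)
The plan is to follow Levin's construction of very large divisors, combining Hironaka's resolution with asymptotic Riemann--Roch for big and nef divisors. First, I would apply Hironaka's theorem (valid in characteristic zero) to obtain a birational morphism $\pi \colon X' \to X$ with $X'$ smooth and such that the reduced total transform $\pi^{-1}(\Supp D)$ is a simple normal crossings divisor with smooth components. This takes care of condition (4) regardless of how the $b_i$ are chosen, because every irreducible component of $E' = \pi^* E$ is either the strict transform of some $D_i$ or an exceptional divisor of $\pi$, and all of these are smooth by construction. Condition (1) is immediate for any choice of positive $b_i$.

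For condition (3), I would observe that a positive integer combination of big and nef divisors is again big; by the standard fact that a sufficiently high multiple of a big divisor defines a birational map onto its image, after replacing the $b_i$ by a large common multiple (which preserves (1) and (4) and only helps with (2)), the rational map $\phi_E \colon X \dashrightarrow \PP^N$ is birational onto its image, and hence so is $\phi_{E'} = \phi_E \circ \pi$.

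The heart of the argument, and the main obstacle, is condition (2). Fix $P \in E'(\overline{k})$ and let $I(P)$ be the set of irreducible components of $E'$ passing through $P$; the combinatorial hypothesis ``at most $m$ of the $D_i$ meet in any point,'' combined with the simple normal crossings arrangement, bounds $|I(P)|$ in terms of $m$. For each component $F \in I(P)$, filter
\[
\Lc(E') = \mathcal{F}^0_F \supseteq \mathcal{F}^1_F \supseteq \cdots, \qquad \mathcal{F}^s_F = \{\psi \in \Lc(E') : \ord_F(\psi) \geq s\},
\]
and compute the dimensions of the graded pieces by asymptotic Riemann--Roch, using that each $D_i$ (and hence up to exceptional contributions each $F$) is big and nef. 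Choosing a basis $B$ of $\Lc(E')$ compatible with the intersections of these filtrations via a standard flag argument yields
\[
\ord_F\Bigl(\prod_{\psi \in B}\psi\Bigr) \;=\; \sum_{\psi \in B}\ord_F(\psi) \;=\; \sum_{s \geq 1}\dim \mathcal{F}^s_F,
\]
which admits a lower bound in terms of intersection numbers of $E'$ with $F$. The inequality $r > 2m\dim(X)$ is precisely what forces this lower bound to be strictly positive once the $b_i$ are taken sufficiently large and suitably balanced.

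The technical difficulty I expect is the bookkeeping for the exceptional divisors of $\pi$: the very-large condition has to be verified at points on exceptional components of $E'$ as well as on strict transforms of the $D_i$, and ensuring that the Riemann--Roch estimates remain uniform and positive across both kinds of components, for a single choice of the $b_i$, is the step where one must most carefully tune the resolution and the multiplicities, essentially transposing Levin's construction in \cite{Levin}.
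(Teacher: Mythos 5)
The paper's proof of this lemma is a short reduction to Levin's published results: after resolving singularities so each $\pi^*D_i$ is non-singular, it notes that the pullbacks are still big and nef (being pullbacks of big and nef divisors under a birational morphism), hence all $q$-fold intersection numbers among the $D_i'$ are nonnegative with $(D_i')^q>0$; it then invokes \cite[Lemma~9.7]{Levin} to conclude $\sum D_i'$ is \emph{equidegreelizable}, and finally invokes \cite[Theorem~9.9]{Levin} to produce the $b_i$. Your proposal takes the same underlying approach (Hironaka plus Levin's construction) but tries to re-derive the content of Levin's Theorem~9.9 from scratch. Your filtration $\mathcal{F}^s_F$ of $\Lc(E')$ by order of vanishing along each component, the identity $\ord_F(\prod_{\psi\in B}\psi)=\sum_{s\geq 1}\dim\mathcal{F}^s_F$ for a compatible basis, and the use of asymptotic Riemann--Roch to bound the graded pieces from below are exactly the right ingredients, and you correctly identify $r>2m\dim(X)$ as what makes the resulting lower bound strictly positive.

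The gap is precisely the step you flag as ``the technical difficulty'': making the Riemann--Roch estimate uniform over all components of $E'$ (strict transforms and exceptional divisors alike) for a single choice of $(b_1,\dots,b_r)$. That bookkeeping is the substance of Levin's Theorem~9.9, and your sketch does not actually carry it out. The intermediate notion the paper leans on --- equidegreelizability of $\sum D_i'$, deduced from positivity of the top self-intersections $(D_i')^q$ and nonnegativity of mixed intersections --- is exactly the mechanism that lets one tune the multiplicities $b_i$ so that the filtration bound works simultaneously at every point. You do not invoke it, nor do you supply a substitute; you only predict that ``the inequality $r>2m\dim(X)$ is precisely what forces this lower bound to be strictly positive once the $b_i$ are taken sufficiently large and suitably balanced,'' which is the conclusion of the theorem rather than a proof of it. Either cite \cite[Lemma~9.7, Theorem~9.9]{Levin} as the paper does, or carry out the equidegreelization argument explicitly; as written the key quantitative step is asserted rather than proved. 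Two smaller remarks: your separate treatment of condition (3) via bigness is fine but unnecessary, since Levin's Theorem~9.9 already delivers birationality of $\phi_{E'}$; and your claim that replacing the $b_i$ by a large common multiple ``only helps with (2)'' should be justified, since very-largeness is a condition on the full linear system $\Lc(E')$ and is not \emph{a priori} monotone in the multiplicities.
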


\begin{proof}
	Let $q = \dim(X) $.
	By Hironaka, we can find a smooth projective $X'$ and birational morphism $ \pi\colon X^\prime \to X $ such that $ D_i' :=  \pi^* D_i $  is non-singular for every $i$.  
	The $ D_i^\prime $ are big and nef again, since they are pullbacks of big and nef divisors along a birational morphism. 
	Therefore, all $q$-fold intersections of the $ D_i^\prime $ are nonnegative and $ {D_i^\prime}^q > 0 $.
	By \cite[Lemma~9.7]{Levin}, the divisor $ \sum_{i=1}^r D_i' $ is equidegreelizable.
	Hence by \cite[Theorem~9.9]{Levin}, we can find suitable $b_i$.
\end{proof}

\begin{proposition}\label{lem:Nicelem}
	Let $m$ be an integer, let $ X $ be a projective variety over $ \oQ $,
	and let $ D = \sum_{i=1}^r D_i$ be a Cartier divisor on $X$, where all the $ D_i $ are effective and ample and at most $m$ of the $ D_i $ meet at any point, with $ r > 2m\dim(X)$.
	Then there is a proper closed subvariety $ Z \sbe X $ and an ample Cartier divisor $ E $ with the same support as $ D $ such that the following is true.
	
	For any countable algebraic function field $ F $, for any finite set of places $ T \sbe M_F $, and for all sets $ R $  of $(E,\OFt)$-integral points
	on $X_F$, 
	there is a subset $R^\prime \sbe R \setminus Z$
	such that $R^\prime $ is Zariski dense in $ R \setminus Z $ and the height function $ h_{X,E} $ 
	is bounded on $ R^\prime $.         
\end{proposition}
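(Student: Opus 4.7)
The strategy is to reduce to the ``very large'' case on a birational modification of $X$ and apply Theorem \ref{t:corvaja zannier function fields} there, then transport the height bound and Zariski density back to $X$.

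\emph{Step 1 (Resolution, and choice of the exceptional set $Z$).} Each $D_i$ is ample, hence big and nef, so Lemma \ref{lem:levinLarge} produces a smooth projective variety $X'$ over $\oQ$, a birational morphism $\pi\colon X' \to X$, and positive integers $b_i$ such that $E := \sum_{i} b_i D_i$ has the same support as $D$, the pullback $E' := \pi^* E$ is very large on $X'$, and $\phi_{E'}$ is birational onto its image. As a positive integer combination of ample Cartier divisors, $E$ is itself ample. Apply Theorem \ref{t:corvaja zannier function fields} to $(X', E')$ to obtain a proper closed subscheme $Z_0' \sbe X'$, let $\mathrm{Exc}(\pi) \sbe X'$ denote the exceptional locus of $\pi$, and set $Z' := Z_0' \cup \mathrm{Exc}(\pi) \sbe X'$ and $Z := \pi(Z') \sbe X$. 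Since $\pi$ is proper, $Z$ is closed; since $\pi$ is birational and surjective with $\dim Z' < \dim X'$, the image $Z$ is a proper subvariety of $X$.

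\emph{Step 2 (Lifting integral points and applying Corvaja-Zannier upstairs).} Fix a countable algebraic function field $F/k$, a finite set $T \sbe M_F$, and a set $R \sbe (X \setminus E)(F)$ of $(E, \OFt)$-integral points. Since $\pi$ restricts to an isomorphism $X' \setminus \mathrm{Exc}(\pi) \xrightarrow{\sim} X \setminus \pi(\mathrm{Exc}(\pi))$, each point of $R \setminus Z$ lifts uniquely to a point of $X'(F) \setminus Z'$; call the resulting set $\tilde R$. Because $E' = \pi^{*} E$ and $\pi$ extends to a morphism of natural $\OFt$-models of $X$ and $X'$, one checks via the valuative criterion that $\tilde R$ is itself a set of $(E', \OFt)$-integral points on $X'_F$ (with respect to the model $\pi^{-1}(\Wm)$, where $\Wm$ is a model witnessing integrality of $R$). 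Applying Theorem \ref{t:corvaja zannier function fields} to $\tilde R$, there is a subset $\tilde R' \sbe \tilde R$ with $\overline{\tilde R'} = \overline{\tilde R}$ such that $h \circ \phi_{E'}$ is bounded on $\tilde R' \setminus Z_0'$; since $\tilde R$ is disjoint from $Z_0' \sbe Z'$ by construction, the bound holds on all of $\tilde R'$.

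\emph{Step 3 (Descent back to $X$).} Set $R' := \pi(\tilde R') \sbe R \setminus Z$. By Weil's height machine (Theorem \ref{t:weils height machine}, properties (5) and (2)), $h_{X', E'}$ differs from $h \circ \phi_{E'}$ by $O(1)$ on the domain of $\phi_{E'}$ and from $h_{X, E} \circ \pi$ by $O(1)$; since $\pi$ restricts to a bijection $\tilde R' \to R'$, boundedness of $h_{X, E}$ on $R'$ follows. For Zariski density, closedness of $\pi$ combined with continuity yields the chain
\[
\overline{R'} \;=\; \pi\bigl(\overline{\tilde R'}\bigr) \;=\; \pi\bigl(\overline{\tilde R}\bigr) \;=\; \overline{\pi(\tilde R)} \;=\; \overline{R \setminus Z},
\]
so $R'$ is Zariski dense in $R \setminus Z$, as required.

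\emph{Main obstacle.} The principal subtlety is the interaction between the two proper closed subsets that must be excised: the set $Z_0'$ furnished by Wang's Subspace Theorem and the exceptional locus $\mathrm{Exc}(\pi)$ of the resolution. Collapsing both images into a single exceptional subvariety $Z \sbe X$ is what makes the lifting of integral points canonical and lets property (2) of Weil's machine transfer the bound from $X'$ to $X$. Property (5) must be invoked only on the domain of $\phi_{E'}$, but the indeterminacy locus of $\phi_{E'}$ is absorbed into $Z_0'$ by construction in Theorem \ref{t:corvaja zannier function fields}, so on its complement the comparison $h_{X', E'} = h \circ \phi_{E'} + O(1)$ is available for free.
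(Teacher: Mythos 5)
Your proof follows the paper's own approach very closely: resolve via Lemma~\ref{lem:levinLarge}, construct $Z$ as the image of the union of the Corvaja--Zannier exceptional locus and the locus where $\pi$ fails to be an isomorphism, lift integral points, apply Theorem~\ref{t:corvaja zannier function fields}, and push the height bound down. The cosmetic differences (you phrase the bad locus as $\mathrm{Exc}(\pi)\subseteq X'$ where the paper uses the singular locus $V\subseteq X$, and you spell out the Zariski-density chain) are harmless and arguably cleaner.

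There is, however, one genuine gap. In Step~3 and again in the closing paragraph you assert that the comparison $h_{X',E'} = h\circ\phi_{E'} + O(1)$ is ``available for free'' on the domain of $\phi_{E'}$. This is not correct when $E'$ has nonempty base locus. Property~(5) of Theorem~\ref{t:weils height machine} requires $E'$ to be \emph{base point free}; for a divisor with fixed part, $\phi_{E'}^*H$ belongs to $|E'-F|$ where $F$ is the fixed component, so on the domain of $\phi_{E'}$ one only gets $h\circ\phi_{E'} = h_{X',E'-F}+O(1)$, and hence merely the one-sided estimate $h\circ\phi_{E'}\leq h_{X',E'}+O(1)$. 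That inequality points the wrong way: knowing $h\circ\phi_{E'}$ is bounded does not bound $h_{X',E'}$, since $h_{X',F}$ can be unbounded above even on points avoiding $\mathrm{Supp}(F)$. And indeed $E'=\pi^*E$ is a priori only big and nef (the pullback of an ample divisor along a birational morphism), not ample, so its base locus can be nonempty. The paper resolves this by observing that $E'$ is semi-ample and replacing the $b_i$ by $nb_i$ for $n\gg 0$ so that $E'$ becomes base point free, after which property~(5) applies literally. Your argument needs this (or an equivalent) step; without it the descent of the height bound from $X'$ to $X$ is not justified.
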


\begin{proof} 
	By Lemma \ref{lem:levinLarge}, we can find a Cartier divisor $ E = \sum_{i=1}^r b_i D_i $ on $ X $ 
	with the same support as $ D $ and a birational proper morphism of projective varieties $\pi\colon X^\prime \to X $
	such that $ E^\prime = \pi^* E $ is very large and  $\phi_{E^\prime}$ is birational onto its image.
	Note that $ E^\prime $ is semi-ample since it is the pullback of an ample divisor.
	Therefore, by replacing the $b_i$ with $ n b_i $ for $ n \gg 0 $, we may assume that $ E' $ is base point free. 

	There is a proper closed $ V \sbe X $ (namely the singular locus of $X$ and $D$) 
	that can be defined over $ K $ such that $ \pi\inv $ is defined outside of $ V $.    
	Let $ Z' \sbe X' $ be a proper closed subscheme like in Theorem \ref{t:corvaja zannier function fields}, and
	let $Z = \pi(Z') \cup V \sbe X $.
	
	Let $ R \sbe X(F) $ be a set of $(E,\OFt)$ integral points. Then $ \pi\inv(R \setminus V ) $ is a set of
	$(E^\prime,\OFt)$-integral points on $ X^\prime $.
	Hence, there is a subset $ R' \sbe R \setminus V  $
 	such that 
 	\begin{enumerate}
 		\item the Zariski closures of $ \pi\inv(R^\prime ) \sbe X'$ and  $  \pi\inv(R \setminus V ) \sbe X'$ equal, and
 		\item the height $ h_{X^\prime, E^\prime }$ is bounded on $ \pi\inv( R^\prime ) \setminus Z^\prime $.
 	\end{enumerate}
 	Since $\pi$ is birational and proper, the set $ Z  \sbe X $ is a  proper closed subvariety. 
 	Since $ h_{X^\prime, E^\prime } = h_{X^\prime, \pi^* E } = h_{X,E} \circ \pi + O(1) $ (see Theorem \ref{t:weils height machine}), we conclude 
 	that  $ h_{X,E} $ is bounded on $ R^{\prime}\setminus Z $. Since $ \pi $ is birational, $ R^{\prime}\setminus Z $ is Zariski dense in $ R \setminus Z$.      
\end{proof}

\begin{lemma}\label{l:induction step}
	Let $ X $ be a projective variety over $ \oQ $, and let $m$ and $r$ be positive integers with $ r>2m\dim(X)$. Let $ D = \sum_{i=1}^r D_i$ be a Cartier divisor on $X$,  where all the $ D_i $ are effective and ample and the intersection of $m+1$ distinct $ D_i $ is empty. 
	Then there is a proper closed subscheme $ Z \sbe X $ such that $X$ is absolutely arithmetically hyperbolic modulo $Z$. 
\end{lemma}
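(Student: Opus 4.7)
The plan is to combine the function-field statement of Proposition \ref{lem:Nicelem} with Levin's number-field result and the Evertse-Gy\H ory transfer principle encoded in Theorem \ref{thm:EvertseGyoryAffine}, in order to deduce finiteness of integral points over arbitrary finitely generated fields of characteristic zero.

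First I apply Proposition \ref{lem:Nicelem} to the data $(X,D)$: this yields an ample Cartier divisor $E=\sum b_iD_i$ with $\Supp E=\Supp D$ and a proper closed subvariety $Z_0\sbe X$ such that over any countable algebraic function field $F$ with a finite set of places $T$, every set $R$ of $(E,\OFt)$-integral points on $X_F$ contains a Zariski-dense subset $R'\sbe R\setminus Z_0$ on which $h_{X,E}$ is bounded. Setting $Z:=Z_0\cup\Supp D$, which is a proper closed subscheme of $X$, I take $U=X\setminus\Supp D$, the affine complement. Since $D_1,\dots,D_r$ are ample, $r>2m\dim X$, and no $m+1$ of them meet, Levin's number-field theorem (\cite[Theorems~6.1A(b) and 6.2A(d)]{Levin}, cf.~\cite{HL}) implies that $U$ is arithmetically hyperbolic over $\oQ$; hence so is its open subvariety $U\setminus Z_0$.

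Next, fix an algebraically closed extension $L\spe\oQ$, a $\ZZ$-finitely generated subring $A\sbe L$ with a model $\mathcal{X}$ of $X$ over $A$, and a $\ZZ$-finitely generated $A'\spe A$ with fraction field $L'$. To show $\mathcal{X}(A')\setminus Z$ is finite, I observe that because $Z\spe\Supp D$, each such point lies in $U(L')$, and after enlarging $A'$ by inverting finitely many elements it corresponds to an integral point on a fixed model $\mathcal{U}'$ of $U\setminus Z_0$ over $A'$; call the resulting subset $R\sbe(U\setminus Z_0)(L')$. I apply Theorem \ref{thm:EvertseGyoryAffine} to $U\setminus Z_0$ and $R$: Condition~(1) is arithmetic hyperbolicity of $U\setminus Z_0$ over $\oQ$, proved above. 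For Condition~(2), given any finite collection $\{F_1,\dots,F_t\}$ of algebraic function fields $F_i\spe L'$, I descend to a countable algebraically closed subfield $k_i\sbe F_i$ containing $\oQ$ and a countable function subfield $F_i^\circ\sbe F_i$ over $k_i$ containing the coordinates of the points of $R$; Proposition \ref{lem:Nicelem} applied to $R\sbe(U\setminus Z_0)(F_i^\circ)$ yields, using $R\cap Z_0=\emptyset$, a Zariski-dense subset of $R\setminus Z_0=R$ on which $h^{aff}_{F_i}=h^{aff}_{F_i^\circ}$ is bounded. Theorem \ref{thm:EvertseGyoryAffine} then furnishes finiteness of $R$, completing the proof.

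The main obstacle is reconciling the output of Proposition \ref{lem:Nicelem}, a Zariski-dense subset of $R\setminus Z_0$, with the hypothesis of Theorem \ref{thm:EvertseGyoryAffine}, which asks for a Zariski-dense subset of $R$ with the same Zariski closure as $R$. Restricting attention to integral points on the smaller affine $U\setminus Z_0$ rather than on $U$ dissolves this gap, once one checks that both arithmetic hyperbolicity over $\oQ$ and integrality pass to the open subvariety $U\setminus Z_0$. A secondary and largely routine point is descending from arbitrary function fields to countable subfields generated by the coordinates of $R$ so that Proposition \ref{lem:Nicelem} applies; since the data defining $R$ is finitely generated over $\QQ$, this descent is automatic.
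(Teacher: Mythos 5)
Your route (Proposition \ref{lem:Nicelem} for the function-field height bound, Levin over $\oQ$ for arithmetic hyperbolicity, then Theorem \ref{thm:EvertseGyoryAffine} to transfer) is the paper's route. But the way you set up the transfer contains a genuine error, and the ``main obstacle'' you identify and then work around is not actually there.

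On the supposed obstacle: Theorem~\ref{thm:EvertseGyoryAffine} can be applied with the ambient set taken to be $R\setminus Z$, not $R$. Since $R\setminus Z$ is still a subset of $\mathcal{W}(A)$ for a model $\mathcal{W}$ of the affine variety $W=X\setminus D$, the theorem's hypothesis then reads ``find $R'\subseteq R\setminus Z$ with $\overline{R'}=\overline{R\setminus Z}$ and bounded affine height,'' and this is exactly what Proposition~\ref{lem:Nicelem} delivers. There is no mismatch to reconcile, and hence no need to pass to $U\setminus Z_0$.

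Your workaround instead introduces two real problems. First, you fix ``a model $\mathcal{X}$ of $X$'' and aim to show $\mathcal{X}(A')\setminus Z$ is finite; but for a projective $X$ and a proper model $\mathcal{X}$, $\mathcal{X}(A')$ is essentially all of $X(L')$, and removing the proper closed $Z$ still leaves a Zariski-dense, typically infinite set. The lemma is really a statement about the affine variety $W=X\setminus D$ (this is what is used in the proof of Theorem~\ref{thm:I}), and one must start from a model $\mathcal{W}$ of $W$, not of $X$. Your attempted fix---``after enlarging $A'$ by inverting finitely many elements it corresponds to an integral point on a fixed model $\mathcal{U}'$ of $U\setminus Z_0$''---does not repair this: a single point can be made integral on a smaller open after inverting finitely many denominators, but there is no uniform finite localization that does so for an a priori infinite set of points, which is precisely what one is trying to prove finite. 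Second, even setting that aside, $U\setminus Z_0$ need not be affine (removing a higher-codimension closed subset from an affine variety generally destroys affineness), yet Theorem~\ref{thm:EvertseGyoryAffine} is stated for $X\subseteq\Aa^n_{\oQ}$ and its proof uses the affine height via that embedding; you would have to justify why the statement still applies.

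A smaller point: you invoke Proposition~\ref{lem:Nicelem} once per $F_i$ to get a dense subset $R_i'$ on which $h^{aff}_{F_i}$ is bounded, but hypothesis (2) of Theorem~\ref{thm:EvertseGyoryAffine} asks for a single $R'$ (with $\overline{R'}=\overline{R\setminus Z}$) on which \emph{all} the $h^{aff}_{F_i}$ are simultaneously bounded. The paper handles this by iterating Proposition~\ref{lem:Nicelem}: apply it to $F_1$ to get $R_1'$, then apply it to $R_1'$ viewed over $F_2$ to get $R_2'\subseteq R_1'$ still dense, and so on. You should make this iteration explicit. Finally, Proposition~\ref{lem:Nicelem} bounds $h_{X,E}$, not $h^{aff}_{F_i}$ directly; one needs the short computation with Weil's height machine relating $h_{X,tE}\vert_{X\setminus D}$ to the affine height through an embedding $X\setminus D\hookrightarrow\Aa^n$, which your sketch skips.
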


\begin{proof}	
	The finiteness of integral points is independent of the chosen model; see for example \cite[Lemma~4.8]{JLalg}. 
	Thus, let $  K $ be a number field, let  $S $ be a finite set of places and l $ \Wm $ be a model for $ W = X \setminus D$ over $\OKs$. Let $A$ be a $\ZZ$-finitely generated integral domain and let $ R = \Wm(A) $.
		
	Let $ \{F_1, \dots, F_t\}$ be a finite set of function fields. By repeatedly applying Proposition \ref{lem:Nicelem}, we can 
	find a proper closed subscheme $ Z \sbe X $, an ample Cartier divisor $E$ on $X$ with the  same support as $D$
	and a subset $ R^\prime \sbe R\setminus Z $ that is	Zariski dense in $ R \setminus Z $ such that $ h_{X,E} $ is bounded on 
	$ R^\prime \sbe (X_{F_j} \setminus E)(F_j) $ for all $j$. 
	
	We claim that $ R \setminus Z $ is finite. To show this, we apply Evertse-Gy\H ory's method.
	Let $t>0$ be an integer such that $tE$ is very ample. 
	By Theorem \ref{t:weils height machine}.3), the height $ h_{X,tE} $ is bounded on $ R^\prime $.
	We may choose an embedding $ \phi\colon X \to \PP^n $ such that $ tE = \phi^* H $ for a hyperplane $ H \sbe \PP^n $.
	The restriction of $ \phi $ to $ {X\setminus D} $ induces an embedding $ \alpha\colon X\setminus D \to \Aa^n \cong \PP^n \setminus H$.
	
	By Theorem  \ref{t:weils height machine}.1) and Theorem  \ref{t:weils height machine}.2), restricting to $ {X\setminus D} $, we have 
	$$
		h_{X,tE} = h_{X,\phi^*H} = h_{\PP^n,H} \circ \phi + O(1) = h \circ \phi +O(1). 
	$$ 
	Again restricting to $ {X\setminus D} $, gives us
	$
		h_{X,tE}\vert_{X\setminus D} = h^{aff} \circ \alpha + O(1).
	$
	Therefore, $ h^{aff}_{F_i} $ is bounded on $ R^\prime \sbe (X \setminus D)(F_i) $.
	Combined with the fact that $X\setminus D$ is arithmetically hyperbolic over $\oQ$ (\cite[Theorem~9.11A]{Levin}), it follows from Theorem \ref{thm:EvertseGyoryAffine} that $ R \setminus Z $ is finite.
\end{proof}

\begin{proof}[Proof of Theorem {\ref{thm:I}}]
	We proceed by induction on the dimension of $X$. Note any variety of dimension $0$ is absolutely arithmetically hyperbolic.
	Let $m$ be a positive integer, let $X$ be a smooth projective connected variety over $\Qbar$ of dimension $\dim(X)>0$, and let $D=\sum_{i=1}^rD_i$ be a sum of $r$ ample effective divisors on $X$ such that at most $m$ of the divisors $D_i$ meet in a point, with $r>2m\dim(X)$. 
	By Lemma \ref{l:induction step}, there is a proper closed subscheme $Z \sbe X$ that contains all but finitely many integral points. 
	We finish the proof by showing that all irreducible components of $ Y \sbe Z $ are absolutely arithmetically hyperbolic. If $ \dim(Y) = 0 $, there is nothing to do. Otherwise, we have $ 0 < \dim(Y)< \dim(X) $. Note that the pullback of the divisor $D$ along the closed immersion $ Y \to X $
	again satisfies the hypotheses of the theorem. Hence by induction $ Y $, is absolutely arithmetically hyperbolic. 
\end{proof}

\printbibliography[heading=bibintoc]

\end{document}